\def\part{\@startsection{part}{1}%
\z@{.7\linespacing\@plus\linespacing}{.5\linespacing}%
{\large\normalfont\bfseries}}
\def\section{\@startsection{section}{1}%
\z@{.7\linespacing\@plus\linespacing}{.5\linespacing}%
{\normalfont\bfseries\centering}}
\def\@settitle{\begin{center}%
  \baselineskip14\p@\relax
    \bfseries
    \LARGE\@title
  \end{center}%
}
\def\@setauthors{%
  \begingroup
  \trivlist
  \centering\footnotesize \@topsep30\p@\relax
  \advance\@topsep by -\baselineskip
  \item\relax
  \andify\authors
  \def\\{\protect\linebreak}%
 {\Large\authors}%
  \endtrivlist
  \endgroup
}
\def\maketitle{\par
  \@topnum\z@ 
  \@setcopyright
  \thispagestyle{firstpage}
  \ifx\@empty\shortauthors \let\shortauthors\shorttitle
  \else \andify\shortauthors
  \fi
  \@maketitle@hook
  \begingroup
  \@maketitle
  \toks@\@xp{\shortauthors}\@temptokena\@xp{\shorttitle}%
  \toks4{\def\\{ \ignorespaces}}
  \edef\@tempa{%
    \@nx\markboth{\the\toks4
      \@nx{\the\toks@}}{\the\@temptokena}}%
  \@tempa
  \endgroup
  \c@footnote\z@
  \def\do##1{\let##1\relax}%
  \do\maketitle \do\@maketitle \do\title \do\@xtitle \do\@title
  \do\author \do\@xauthor \do\address \do\@xaddress
  \do\email \do\@xemail \do\curraddr \do\@xcurraddr
  \do\commby \do\@commby
  \do\dedicatory \do\@dedicatory \do\thanks \do\thankses
  \do\keywords \do\@keywords \do\subjclass \do\@subjclass
}
\newcommand{\dd}{\mathrm{d}}
\newcommand{\R}{\mathbb{R}}
\newcommand{\Exp}{\mathbb{E}}
\renewcommand{\Pr}{\mathbb{P}}
\newcommand{\ind}[1]{\mathds{1}_{\{#1\}}}
\newcommand{\uto}{\uparrow}
\newcommand{\dto}{\downarrow}
\newcommand{\Cs}{\mathrm{C}}
\newcommand{\Ws}{\mathrm{W}}
\newcommand{\Neigh}{\mathcal{V}}
\newcommand{\sk}{\vskip 3mm}
\newtheorem{defi}{Definition}
\newtheorem{lem}[defi]{Lemma}
\newtheorem{prop}[defi]{Proposition}
\newtheorem{theo}[defi]{Theorem}
\newtheorem{cor}[defi]{Corollary}
\newtheoremstyle{myremark}{}{}{}{0pt}{\bfseries}{.}{ }{}
\theoremstyle{myremark}
\newtheorem{rk}[defi]{Remark}
\title{Chaoticity of the stationary distribution of rank-based interacting diffusions}
\author{Julien Reygner}
\address{{\rm \noindent Sorbonne Universités, UPMC Univ Paris 06, UMR 7599, LPMA, F-75005 Paris.\newline
\indent Université Paris-Est, CERMICS (ENPC), F-77455 Marne-la-Vallée.}}
\curraddr{{\rm Laboratoire de Physique, \'Ecole Normale Sup\'erieure de Lyon, 46 all\'ee d'Italie, F-69364 Lyon.}}
\email{\href{mailto:julien.reygner@polytechnique.org}{julien.reygner@polytechnique.org}}
\subjclass[2010]{60H10, 60F05}
\keywords{Rank-based interacting diffusions, nonlinear diffusion process, stationary distribution, chaoticity, Wasserstein distance}
\begin{document}

\begin{abstract}
  The mean-field limit of systems of rank-based interacting diffusions is known to be described by a nonlinear diffusion process. We obtain a similar description at the level of stationary distributions. Our proof is based on explicit expressions for the Laplace transforms of these stationary distributions and yields convergence of the marginal distributions in Wasserstein distances of all orders. We highlight the consequences of this result on the study of rank-based models of equity markets, such as the Atlas model. 
\end{abstract}

\maketitle

\section{Introduction}

\subsection{Rank-based interacting diffusions} Let $b : [0,1] \to \R$ be a continuous function, and $\sigma \not= 0$. For all $n \geq 1$, consider the system of rank-based interacting diffusions, or particles,
\begin{equation}\label{eq:SDE}
  \forall i \in \{1, \ldots, n\}, \qquad \dd X^n_i(t) = b_n\left(\sum_{j=1}^n \ind{X^n_j(t) \leq X^n_i(t)}\right) \dd t + \sigma \dd W_i(t),
\end{equation}
where $(W_1(t), \ldots, W_n(t))_{t \geq 0}$ is a standard Brownian motion in $\R^n$, and for all $k \in \{1, \ldots, n\}$,
\begin{equation*}
  b_n(k) := n \int_{v=\frac{k-1}{n}}^{\frac{k}{n}} b(v)\dd v = n\left(B\left(\frac{k}{n}\right) - B\left(\frac{k-1}{n}\right)\right), \qquad B(u) := \int_{v=0}^u b(v)\dd v.
\end{equation*}
By the Girsanov theorem, the stochastic differential equation~\eqref{eq:SDE} possesses a unique weak solution, and actually a unique strong solution, see~\cite{veretennikov} for instance. When the number $n$ of particles grows to infinity, propagation of chaos results toward the unique weak solution to the nonlinear (in McKean's sense) stochastic differential equation
\begin{equation}\label{eq:NLSDE}
  \left\{\begin{aligned}
    & \dd X(t) = b(F_t(X(t))) \dd t + \sigma \dd W(t),\\
    & F_t(x) = \Pr(X(t) \leq x),
  \end{aligned}\right.
\end{equation}
where obtained in~\cite{jourdain:signed, jm}; we also refer to~\cite{jourey, dsvz} for nonconstant diffusion coefficients. 

These propagation of chaos results are not uniform in time, and therefore do not provide any indication on the link between the long time behaviour of the particle system, which was studied in~\cite{pp,jm}, and the long time behaviour of the nonlinear diffusion process, which was described in~\cite{jm,jourey}. The purpose of this note is to clarify this link by showing that the stationary distribution of a suitably modified version of the particle system is chaotic with respect to the stationary distribution of the nonlinear process.

\subsection{Projected particle system and nonlinear diffusion process}\label{ss:intro:stat} As was remarked in~\cite{jm}, the solution to~\eqref{eq:SDE} cannot converge to an equilibrium, since its projection along the direction $(1, \ldots, 1)$ is a drifted Brownian motion. One can however address the long time behaviour of the projection onto the hyperplane
\begin{equation*}
  M_n := \{(z_1, \ldots, z_n) \in \R^n : z_1 + \cdots + z_n = 0\},
\end{equation*}
which is orthogonal to the singular direction $(1, \ldots, 1)$. The resulting process is called the {\em projected particle system}, it is the $M_n$-valued diffusion process solving
\begin{equation*}
    \dd Z^n_i(t) = \left(b_n\left(\sum_{j=1}^n \ind{Z^n_j(t) \leq Z^n_i(t)}\right)-\bar{b}\right)\dd t + \sigma\frac{n-1}{n} \dd W_i(t) - \frac{\sigma}{n} \sum_{j \not= i} \dd W_j(t),
\end{equation*}
where $\bar{b} := \frac{1}{n} \sum_{k=1}^n b_n(k) = B(1)$. Propagation of chaos for the projected particle system toward the nonlinear diffusion process~\eqref{eq:NLSDE} was established in~\cite{jm}. The long time behaviour of the projected particle system was addressed by~\cite{pp,jm}, under the following natural assumption:
\begin{enumerate}[label=(E), ref=E]
  \item \label{ass:E} $\bar{b}=B(1)=0$, and the function $b$ is decreasing on $[0,1]$.
\end{enumerate}
The following proposition is due to Pal and Pitman~\cite[Theorem~8]{pp}; see also Jourdain and Malrieu~\cite[Theorem~2.12]{jm}. We use the notation $z_{(1)} \leq \cdots \leq z_{(n)}$ to refer to the order statistics of a vector $(z_1, \ldots, z_n) \in \R^n$.
\begin{prop}\label{prop:pp}
  Under Assumption~\eqref{ass:E}, for all $n \geq 1$,
  \begin{equation*}
    \mathcal{Z}_n := \int_{z \in M_n} \exp\left(\frac{2}{\sigma^2} \sum_{k=1}^n b_n(k) z_{(k)}\right) \dd z < +\infty,
  \end{equation*}
  and the probability distribution with density
  \begin{equation*}
    p^n_{\infty}(z) := \frac{1}{\mathcal{Z}_n} \exp\left(\frac{2}{\sigma^2} \sum_{k=1}^n b_n(k) z_{(k)}\right)
  \end{equation*}
  with respect to the surface measure $\dd z$ on $M_n$ is the unique stationary distribution of the process $(Z^n_1(t), \ldots, Z^n_n(t))_{t \geq 0}$.
\end{prop}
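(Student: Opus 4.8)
The plan is to establish the three assertions in turn --- finiteness of $\mathcal{Z}_n$, invariance of $p^n_{\infty}\,\dd z$, and uniqueness --- the key point being that the projected particle system is reversible with respect to $p^n_{\infty}\,\dd z$. For finiteness, Assumption~\eqref{ass:E} makes $B$ concave with $B(0)=B(1)=0$, hence $B\geq 0$ on $[0,1]$ with $nB(k/n)>0$ for $1\leq k\leq n-1$ (excluding the trivial case $b\equiv 0$); set $c_n:=\min_{1\leq k\leq n-1}nB(k/n)>0$. Since $\sum_{l=1}^k b_n(l)=nB(k/n)$ with $nB(0)=nB(1)=0$, Abel summation gives, for every $z\in M_n$,
\begin{equation*}
  \sum_{k=1}^n b_n(k)\,z_{(k)}=\sum_{k=1}^{n-1}nB(k/n)\bigl(z_{(k)}-z_{(k+1)}\bigr)\leq c_n\sum_{k=1}^{n-1}\bigl(z_{(k)}-z_{(k+1)}\bigr)=-c_n\bigl(z_{(n)}-z_{(1)}\bigr).
\end{equation*}
As $z\in M_n$ forces $z_{(1)}\leq 0\leq z_{(n)}$, hence $\max_i|z_i|\leq z_{(n)}-z_{(1)}$, the density $p^n_{\infty}$ is dominated by $\mathcal{Z}_n^{-1}\exp(-(2c_n/\sigma^2)\max_i|z_i|)$, which is integrable over the $(n-1)$-dimensional space $M_n$; hence $\mathcal{Z}_n<+\infty$ and $p^n_{\infty}$ is a genuine probability density.

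For invariance, write $r_i(z)$ for the rank of $z_i$ and $\beta(z):=(b_n(r_1(z))-\bar{b},\ldots,b_n(r_n(z))-\bar{b})$ for the drift of the projected system; since the ranks form a permutation of $\{1,\ldots,n\}$, one has $\sum_i b_n(r_i(z))=\sum_k b_n(k)=n\bar{b}$, so $\beta(z)\in M_n$. The noise being $\sigma$ times the orthogonal projection of $W$ onto $M_n$, its diffusion matrix acts as $\sigma^2$ times the identity on $M_n$, and the generator of the process on (restrictions to $M_n$ of) smooth functions is $\mathcal{L}\varphi=\tfrac{\sigma^2}{2}\Delta\varphi+\beta\cdot\nabla\varphi$, with $\Delta$ and $\nabla$ the intrinsic Laplacian and gradient on $M_n$. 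The crucial observation is that $g(z):=\sum_k b_n(k)z_{(k)}$ is Lipschitz on $M_n$, of class $C^{\infty}$ off the Lebesgue-negligible set $N:=\bigcup_{i\neq j}\{z_i=z_j\}$, and that on each connected component of $M_n\setminus N$ it coincides with $z\mapsto\sum_i b_n(r_i(z))z_i$, whose $M_n$-gradient is the orthogonal projection of $(b_n(r_1(z)),\ldots,b_n(r_n(z)))$ onto $M_n$, that is, exactly $\beta(z)$. Hence $p^n_{\infty}$ is Lipschitz on $M_n$ with $\nabla p^n_{\infty}=\tfrac{2}{\sigma^2}\beta\,p^n_{\infty}$ almost everywhere, i.e.\ $\beta=\tfrac{\sigma^2}{2}\nabla\log p^n_{\infty}$ a.e.\ --- the detailed-balance (gradient) structure of the model. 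Then for any test function $\varphi$, integrating by parts on $M_n$ (legitimate since $p^n_{\infty}\in H^1_{\loc}$ and $\varphi$ is smooth with compact support) and using the chain rule for the Lipschitz function $p^n_{\infty}$,
\begin{equation*}
  \int_{M_n}\mathcal{L}\varphi\,p^n_{\infty}\,\dd z=-\frac{\sigma^2}{2}\int_{M_n}\nabla\varphi\cdot\nabla p^n_{\infty}\,\dd z+\int_{M_n}(\beta\cdot\nabla\varphi)\,p^n_{\infty}\,\dd z=0,
\end{equation*}
the two terms cancelling by the preceding identity. Since this holds for every $\varphi$, the probability measure $p^n_{\infty}\,\dd z$ solves the stationary Fokker--Planck equation, hence is an invariant distribution; running the same integration by parts against a second test function $\psi$ yields $\int_{M_n}\mathcal{L}\varphi\,\psi\,p^n_{\infty}\,\dd z=-\tfrac{\sigma^2}{2}\int_{M_n}p^n_{\infty}\,\nabla\varphi\cdot\nabla\psi\,\dd z$, symmetric in $(\varphi,\psi)$, so $p^n_{\infty}\,\dd z$ is in fact reversible.

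For uniqueness, $b$ being continuous on $[0,1]$ the drift $\beta$ is bounded, and the noise is additive and non-degenerate on $M_n$; by the Girsanov theorem the law on path space of $(Z^n_1(t),\ldots,Z^n_n(t))_{t\leq T}$ is equivalent to that of a Brownian motion on $M_n$ started from the same point, for each $T$, so the transition kernels $P_t(z,\cdot)$, $t>0$, are mutually equivalent and equivalent to the surface measure on $M_n$; hence the process is strong Feller and irreducible, which forces the invariant probability measure to be unique. The step I expect to be the main obstacle is the gradient identity and the ensuing integration by parts: $p^n_{\infty}$ is merely Lipschitz and its gradient is discontinuous precisely along $N$, which is also where $\beta$ jumps, so one must check carefully that these interfaces contribute nothing --- the clean reason being the a.e.\ identity $\beta=\tfrac{\sigma^2}{2}\nabla\log p^n_{\infty}$, which says that on each chamber of fixed coordinate ordering the constant drift is the gradient of the log-density of the associated Gaussian.
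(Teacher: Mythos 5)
First, note that the paper does not prove this proposition at all: it is quoted from Pal--Pitman~\cite[Theorem~8]{pp} and Jourdain--Malrieu~\cite[Theorem~2.12]{jm}, so there is no internal proof to compare against. Your strategy (exhibit the gradient/reversible structure of the projected system and conclude by irreducibility) is in the spirit of the Jourdain--Malrieu treatment, and two of your three blocks are sound: the Abel-summation bound $\sum_k b_n(k)z_{(k)}\leq -c_n(z_{(n)}-z_{(1)})\leq -c_n\max_i|z_i|$ on $M_n$ does give $\mathcal{Z}_n<+\infty$, and the identity $\nabla_{M_n}\log p^n_{\infty}=\tfrac{2}{\sigma^2}\beta$ a.e.\ is correct (here the ``interfaces'' you single out as the main obstacle are in fact harmless: $g$ is Lipschitz, hence its a.e.\ gradient is its weak gradient, and the integration by parts against compactly supported smooth $\varphi$ is legitimate with no boundary contribution along $N$).

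The genuine gap is the sentence ``the probability measure solves the stationary Fokker--Planck equation, hence is an invariant distribution.'' What you have shown is infinitesimal invariance, $\int_{M_n}\mathcal{L}\varphi\,p^n_{\infty}\,\dd z=0$ for smooth compactly supported $\varphi$; passing from this to invariance of $p^n_{\infty}\,\dd z$ under the transition semigroup of the SDE is not formal when the drift is discontinuous. Echeverr\'ia-type theorems require continuous coefficients, so you must either invoke an extension (Bhatt--Karandikar, Stannat, Bogachev--R\"ockner) or identify the semigroup of the SDE with the semigroup of the Dirichlet form $\tfrac{\sigma^2}{2}\int\nabla\varphi\cdot\nabla\psi\,p^n_{\infty}\,\dd z$, which itself needs a core/uniqueness argument (e.g.\ weak uniqueness for bounded measurable drift \`a la Veretennikov) -- the same issue affects your reversibility claim, since symmetry of the form on test functions does not by itself give symmetry of the actual transition kernels. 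Separately, in the uniqueness step, mutual equivalence of the kernels $P_t(z,\cdot)$ (which Girsanov does give) does not imply the strong Feller property; either cite the strong Feller property for bounded drift and additive nondegenerate noise directly, or avoid it altogether by noting that every invariant probability measure is then equivalent to the surface measure on $M_n$, while distinct ergodic invariant measures would have to be mutually singular. With these repairs the argument closes, but as written the key implication ``weak stationarity $\Rightarrow$ stationarity'' is asserted rather than proved; the original references sidestep it (Pal--Pitman work through the spacings process, a reflected Brownian motion in a wedge satisfying the Harrison--Williams skew-symmetry condition, with explicit product-form stationary law).
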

Let us remark that the density $p^n_{\infty}(z)$ only depends on the order statistics of $z$, and therefore is invariant under the permutations of the coordinates of $z$. As a consequence, the probability distribution $P^n_{\infty} := p^n_{\infty}(z)\dd z$ is a symmetric probability distribution on $\R^n$, which gives full measure to $M_n$.

\sk
On the other hand, the stationary distributions of the nonlinear diffusion process were described in~\cite{jourey}. This description relies on the function $\Phi$ introduced in Lemma~\ref{lem:Psi} below.

\begin{lem}\label{lem:Psi}
  Under Assumption~\eqref{ass:E},
  \begin{enumerate}[label=(\roman*), ref=\roman*]
    \item $b(0)>0>b(1)$ and, for all $u \in (0,1)$, $B(u)>0$,
    \item the function $\Phi : (0,1) \to \R$ defined by 
    \begin{equation}\label{eq:Psialt}
      \forall u \in (0,1), \qquad \Phi(u) := \int_{v=0}^u \frac{v \sigma^2}{2B(v)}\dd v - \int_{v=u}^1 \frac{(1-v)\sigma^2}{2B(v)}\dd v
    \end{equation}
    is $\Cs^2$ and increasing on $(0,1)$, and satisfies
    \begin{equation}\label{eq:Psiequiv}
      \Phi(u) \sim \frac{\sigma^2}{2b(0)} \log (u) \quad \text{when $u \dto 0$}, \qquad \Phi(u) \sim \frac{\sigma^2}{2b(1)} \log (1-u) \quad \text{when $u \uto 1$}.
    \end{equation}
    Besides, it is integrable on $[0,1]$ and such that
    \begin{equation}\label{eq:Psicentre}
      \int_{u=0}^1 \Phi(u)\dd u = 0.
    \end{equation}
  \end{enumerate}
\end{lem}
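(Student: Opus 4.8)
The plan is to unpack the hypotheses of Assumption~\eqref{ass:E} step by step. Since $b$ is decreasing on $[0,1]$ with $B(1)=\int_0^1 b=0$, the function $b$ cannot be constant (else $b\equiv 0$, contradicting $\sigma\neq 0$... actually $\sigma$ plays no role here, but a constant $b$ with $B(1)=0$ forces $b\equiv 0$, and then $B\equiv 0$, making $\Phi$ ill-defined). So $b$ is strictly decreasing in the sense that it changes sign: since $\int_0^1 b = 0$ and $b$ is nonincreasing and nonconstant, there is some $u_*\in(0,1)$ with $b>0$ on $[0,u_*)$ and $b<0$ on $(u_*,1]$; in particular $b(0)>0>b(1)$. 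For the claim $B(u)>0$ on $(0,1)$: $B$ is increasing on $[0,u_*]$ (where $b\geq 0$) starting from $B(0)=0$, hence $B>0$ on $(0,u_*]$; and $B$ is decreasing on $[u_*,1]$, ending at $B(1)=0$, hence $B>0$ on $[u_*,1)$. This proves (i).

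For (ii), the $\Cs^2$ regularity is immediate from the definition~\eqref{eq:Psialt}: on any compact subinterval of $(0,1)$ the integrand $v\sigma^2/(2B(v))$ is continuous since $B>0$ there by (i), and $b=B'$ is $\Cs^1$ because... well, $b$ is only assumed continuous, so $B$ is $\Cs^1$ and the integrands are continuous, making each integral $\Cs^1$ in $u$; differentiating, $\Phi'(u) = \frac{u\sigma^2}{2B(u)} + \frac{(1-u)\sigma^2}{2B(u)} = \frac{\sigma^2}{2B(u)}>0$, which is $\Cs^1$ in $u$ (as $B$ is $\Cs^1$ and positive), so $\Phi$ is $\Cs^2$ and strictly increasing. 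For the asymptotics~\eqref{eq:Psiequiv}: near $u=0$, the second integral $\int_u^1 (1-v)\sigma^2/(2B(v))\,\dd v$ converges to a finite constant (the only possible singularity is at $v=0$, where $B(v)\sim b(0)v$ so $(1-v)/B(v)\sim 1/(b(0)v)$ is not integrable — wait, it \emph{is} a problem). Let me re-examine: actually near $v=0$, $(1-v)/(2B(v))\sim 1/(2b(0)v)$, so $\int_u^1$ of this diverges like $-\frac{\sigma^2}{2b(0)}\log u$ as $u\dto 0$; meanwhile the first integral $\int_0^u v\sigma^2/(2B(v))\,\dd v$ has integrand $\sim \sigma^2/(2b(0))$ near $0$, hence is $O(u)\to 0$. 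So $\Phi(u)\sim -(-\frac{\sigma^2}{2b(0)}\log u) = \frac{\sigma^2}{2b(0)}\log u$ — matching the claim (note $\log u<0$ and $b(0)>0$, so $\Phi(u)\to-\infty$, consistent with $\Phi$ increasing). The behaviour at $u\uto 1$ is symmetric, using $B(v)\sim -b(1)(1-v)$ wait $B(v) = B(1) - \int_v^1 b = -\int_v^1 b \sim -b(1)(1-v)$... with $b(1)<0$ this is $\sim |b(1)|(1-v)>0$, good; the first integral then diverges like $\frac{\sigma^2}{2b(1)}\log(1-u)$ (again $\to+\infty$ since $b(1)<0$, $\log(1-u)\to-\infty$). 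Integrability of $\Phi$ on $[0,1]$ follows since $|\Phi|$ is dominated by a constant times $|\log u| + |\log(1-u)|$ near the endpoints, which is integrable.

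The remaining point, $\int_0^1\Phi(u)\,\dd u = 0$, is the one computational identity worth doing carefully, and I expect it to be the only place requiring real work. The approach is Fubini: write $\int_0^1\Phi(u)\,\dd u = \int_0^1\left(\int_0^u \frac{v\sigma^2}{2B(v)}\,\dd v - \int_u^1\frac{(1-v)\sigma^2}{2B(v)}\,\dd v\right)\dd u$ and swap the order of integration in each double integral. In the first term, $\{0<v<u<1\}$ gives $\int_0^1 \frac{v\sigma^2}{2B(v)}(1-v)\,\dd v$; in the second, $\{0<u<v<1\}$ gives $\int_0^1\frac{(1-v)\sigma^2}{2B(v)}v\,\dd v$. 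The two are identical, so their difference vanishes, giving~\eqref{eq:Psicentre}. (One should check the integrability needed to justify Fubini — the integrand $v(1-v)/B(v)$ behaves like $v/b(0)$ near $0$ and $(1-v)/|b(1)|$ near $1$, both bounded, so this is fine.) The main obstacle, if any, is simply being careful with signs and with the endpoint asymptotics of $B$; there is no deep difficulty.
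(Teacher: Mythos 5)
Your proof is correct and follows essentially the same route as the paper: the endpoint equivalents $B(v)\sim b(0)v$ and $B(v)\sim -b(1)(1-v)$ give the finiteness, $\Cs^2$ regularity, monotonicity and the asymptotics~\eqref{eq:Psiequiv}, and the centering~\eqref{eq:Psicentre} is obtained by the same Fubini--Tonelli identity $\int_0^1\int_0^u \frac{v\sigma^2}{2B(v)}\,\dd v\,\dd u=\int_0^1\frac{v(1-v)\sigma^2}{2B(v)}\,\dd v=\int_0^1\int_u^1\frac{(1-v)\sigma^2}{2B(v)}\,\dd v\,\dd u$. The only (minor) divergence is in point (i), where you prove $B>0$ on $(0,1)$ by locating the sign change of $b$ and using the monotonicity of $B$ on the two subintervals, while the paper uses the averaging inequality $\frac1u\int_0^u b(v)\,\dd v> b(u)>\frac1{1-u}\int_u^1 b(v)\,\dd v$, i.e.\ $(1-u)B(u)>u(B(1)-B(u))$; both arguments are equally elementary and valid.
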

\begin{proof}
  Under Assumption~\eqref{ass:E}, the chain of inequalities $b(0)>0>b(1)$ is straightforward. Besides, for all $u \in (0,1)$,
  \begin{equation*}
    \frac{1}{u} \int_{v=0}^u b(v)\dd v > b(u) > \frac{1}{1-u} \int_{v=u}^1 b(v)\dd v,
  \end{equation*}
  which implies $(1-u)B(u) > u(B(1)-B(u))$ and finally $B(u) > uB(1) = 0$, whence the first point.
  
  Assumption~\eqref{ass:E} combined with the continuity of $b$ also implies that
  \begin{itemize}
    \item when $u \dto 0$, $B(u) \sim b(0)u$, with $b(0)>0$,
    \item when $u \uto 1$, $B(u) \sim -b(1)(1-u)$, with $b(1)<0$,
  \end{itemize}
  therefore the integrals in the right-hand side of~\eqref{eq:Psialt} are finite, and the function $\Phi$ is $\Cs^2$ and increasing on $(0,1)$, and satisfies~\eqref{eq:Psiequiv}. The integrability of $\Phi$ on $[0,1]$ follows from~\eqref{eq:Psiequiv} and the continuity of $\Phi$ on $(0,1)$, and by the Fubini-Tonelli theorem,
  \begin{equation*}
    \int_{u=0}^1 \int_{v=0}^u \frac{v \sigma^2}{2B(v)}\dd v\dd u = \int_{v=0}^1 \frac{v(1-v) \sigma^2}{2B(v)}\dd v = \int_{u=0}^1 \int_{v=u}^1 \frac{(1-v) \sigma^2}{2B(v)}\dd v\dd u,
  \end{equation*}
  whence~\eqref{eq:Psicentre}.
\end{proof}

Note that the inverse function $\Phi^{-1}$ of the function $\Phi$ defined in Lemma~\ref{lem:Psi} is the cumulative distribution function $F_{\infty}$ of a probability distribution $P_{\infty}$ on $\R$, which is such that
\begin{equation*}
  \int_{x \in \R} |x|P_{\infty}(\dd x) = \int_{u=0}^1 |\Phi(u)|\dd u < +\infty, \qquad \int_{x \in \R} xP_{\infty}(\dd x) = \int_{u=0}^1 \Phi(u)\dd u = 0.
\end{equation*}
Besides, since $\Phi$ is $\Cs^2$ and $\Phi'(u) > 0$ for all $u \in (0,1)$, we deduce that $P_{\infty}$ possesses a density $p_{\infty}$ with respect to the Lebesgue measure on $\R$, which writes $p_{\infty}(x) = \frac{2}{\sigma^2} B(F_{\infty}(x))$. 

We can now recall the description of the set of stationary distributions of the nonlinear process, which follows from~\cite[Proposition~4.1]{jourey}.

\begin{prop}\label{prop:jourey}
  Under Assumption~\eqref{ass:E}, the stationary probability distributions for the nonlinear process $(X_t)_{t \geq 0}$ are the translations of the probability distribution $P_{\infty}$; that is to say, the probability distributions with cumulative distribution function $x \mapsto F_{\infty}(x+\bar{x})$ for some $\bar{x} \in \R$.
\end{prop}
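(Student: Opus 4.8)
The plan is to translate stationarity into a first-order ordinary differential equation for the cumulative distribution function, and to recognise the function $\Phi$ of Lemma~\ref{lem:Psi} as the primitive that solves it; the reverse inclusion then follows by running the computation backwards.

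First I would observe that a stationary distribution $\mu$, with cumulative distribution function $F$, necessarily has a density. Indeed, if $X(0) \sim \mu$ then $X(t) \sim \mu$ for every $t \geq 0$, so the drift in~\eqref{eq:NLSDE} is the fixed bounded measurable function $x \mapsto b(F(x))$, and $X$ solves the \emph{linear} equation $\dd X(t) = b(F(X(t)))\dd t + \sigma\dd W(t)$; since $\sigma \neq 0$, the Girsanov theorem shows that the law of $X(t)$ for any $t > 0$, hence $\mu$, is absolutely continuous. Write $p$ for its density. Then $F$ is continuous, so $b \circ F$ is continuous, and a standard bootstrap on the stationary Fokker--Planck equation shows that $p$ is $\Cs^1$, that $F$ is $\Cs^2$, and that, after two integrations in $x$,
\begin{equation*}
  \frac{\sigma^2}{2}F'(x) = B(F(x)) + Jx + K, \qquad x \in \R,
\end{equation*}
for some constants $J, K \in \R$.

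Next I would pin down $J$ and $K$ using that $F$ is a cumulative distribution function. Since $F' = p \geq 0$ and $B(F(x)) \to B(1) = 0$ as $x \to +\infty$, letting $x \to +\infty$ excludes $J > 0$ (which would force $F'(x) \to +\infty$, contradicting $\int_{\R} F' = 1$) and $J < 0$ (which would force $F' < 0$ eventually); hence $J = 0$, and then $\frac{\sigma^2}{2}F'(x) \to K$, so integrability of $F'$ forces $K = 0$. Thus $\frac{\sigma^2}{2}F'(x) = B(F(x))$ on $\R$. Let $(a,b) = \{x : 0 < F(x) < 1\}$, a nonempty open interval since $F$ is continuous and runs from $0$ to $1$. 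On $(a,b)$ one has $B(F(x)) > 0$ since $B > 0$ on $(0,1)$ by Lemma~\ref{lem:Psi}, so $\frac{\dd}{\dd x}\Phi(F(x)) = \Phi'(F(x))F'(x) = \frac{\sigma^2 F'(x)}{2B(F(x))} = 1$, whence $\Phi(F(x)) = x + \bar{x}$ for some $\bar{x} \in \R$. If $a$ were finite, then $\Phi(F(x)) \to \Phi(0^+) = -\infty$ as $x \dto a$ by~\eqref{eq:Psiequiv}, contradicting $\Phi(F(x)) = x + \bar{x} \to a + \bar{x}$; likewise $b = +\infty$. Hence $F(x) = \Phi^{-1}(x+\bar{x}) = F_{\infty}(x+\bar{x})$ for every $x \in \R$, i.e.\ $\mu$ is a translation of $P_{\infty}$.

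For the converse, for any $\bar{x} \in \R$ the function $x \mapsto F_{\infty}(x+\bar{x})$ is the cumulative distribution function of a translate of $P_{\infty}$, with density $p_{\infty}(\cdot + \bar{x}) = \frac{2}{\sigma^2}B(F_{\infty}(\cdot+\bar{x}))$; running the previous computation backwards shows that this density makes the probability flux $\frac{\sigma^2}{2}p'_{\infty}(\cdot+\bar{x}) - b(F_{\infty}(\cdot+\bar{x}))\,p_{\infty}(\cdot+\bar{x})$ vanish identically, hence is invariant for the associated frozen linear diffusion; by well-posedness of~\eqref{eq:NLSDE} the process started from it is therefore stationary. The only delicate point in this scheme is the regularity bootstrap turning an abstract stationary distribution into a classical solution of the Fokker--Planck equation; everything else is driven by Lemma~\ref{lem:Psi}. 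Alternatively, the statement can be read off from~\cite[Proposition~4.1]{jourey}.
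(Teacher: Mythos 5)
Your proof is correct, but it is not the route the paper takes: the paper does not prove Proposition~\ref{prop:jourey} internally at all, it invokes \cite[Proposition~4.1]{jourey} and only adds the remark that $\Phi$ and $\Psi$ share the derivative $\sigma^2/(2B)$, so the translations of $\Phi^{-1}$ and of $\Psi^{-1}$ form the same set. What you supply instead is a self-contained derivation at the level of the stationary Fokker--Planck equation: freeze the drift at $b(F(\cdot))$ by stationarity, get a density via Girsanov, integrate the stationary equation twice to reach $\frac{\sigma^2}{2}F'(x)=B(F(x))+Jx+K$, kill $J$ and $K$ (a trivial point: $K<0$ is excluded by $F'\geq 0$ rather than by integrability), and then use Lemma~\ref{lem:Psi} --- positivity of $B$ on $(0,1)$ and the boundary asymptotics~\eqref{eq:Psiequiv} --- to force $\Phi(F(x))=x+\bar{x}$ on all of $\R$, i.e.\ $F=F_{\infty}(\cdot+\bar{x})$. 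This is in substance the analysis carried out in \cite{jourey} for the equation satisfied by $F_t$, so you are reproving the cited result rather than genuinely diverging from it; what your write-up buys is independence from the external reference, at the price of two steps you should make explicit if you want a complete proof: (i) the regularity bootstrap you already flag, and (ii) in the converse direction, the passage from the vanishing flux (infinitesimal invariance) to actual invariance of the frozen linear diffusion, and from there to stationarity of the nonlinear process via weak uniqueness of~\eqref{eq:NLSDE}, which the paper grants by citing \cite{jourdain:signed, jm}. Both are standard for a bounded continuous drift and constant non-degenerate $\sigma$, so your argument goes through.
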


Ergodicity results for the nonlinear diffusion process were obtained in~\cite{jm, jourey}. Let us precise that in~\cite{jourey}, the stationary distributions are proven to be the translations of the function $\Psi$ defined on $(0,1)$ by
\begin{equation*}
  \forall u \in (0,1), \qquad \Psi(u) := \int_{v=\frac{1}{2}}^u \frac{\sigma^2}{2B(v)}\dd v.
\end{equation*}
Since $\Phi$ and $\Psi$ have the same derivative, it is clear that the set of translations of $\Phi^{-1}$ coincides with the set of translations of $\Psi^{-1}$.

As a consequence of Proposition~\ref{prop:jourey}, a stationary distribution for the nonlinear process is characterised by its expectation. In particular, $P_{\infty}$ is the unique centered stationary distribution of the nonlinear process.

\subsection{Main results and outline} We are now ready to state our main results and detail the outline of the paper.

Let us first recall the definition of the notion of chaoticity~\cite[Definition~2.1, p.~177]{sznitman}. If $P^n$ is a probability distribution on $\R^n$ and $k \in \{1, \ldots, n\}$, we denote by $P^{k,n}$ the marginal distribution of the $k$ first coordinates under $P^n$.
\begin{defi}\label{defi:chaoticity}
  For all $n \geq 1$, let $P^n$ be a symmetric probability distribution on $\R^n$, and let $P$ be a probability distribution on $\R$. The sequence $(P^n)_{n \geq 1}$ is said to be $P$-chaotic if, for all $k \geq 1$, $P^{k,n}$ converges weakly to the product measure $P^{\otimes k}$.
\end{defi}

Recall that we denote by $P^n_{\infty}$ the unique stationary distribution of the projected particle system; it is the probability distribution on $\R^n$ with density $p^n_{\infty}(z)$ with respect to the surface measure $\dd z$ on $M_n$. On the other hand, $P_{\infty}$ refers to the unique centered stationary distribution of the nonlinear diffusion process; it is the probability distribution with density $p_{\infty}$ with respect to the Lebesgue measure on $\R$. Of course, our purpose is to establish the $P_{\infty}$-chaoticity of the sequence $(P^n_{\infty})_{n \geq 1}$. Our proof is based on the study of the Laplace transform
\begin{equation*}
  L^{2,n}_{\infty}(s,t) := \int_{z \in M_n} \exp(sz_1 + tz_2) p^n_{\infty}(z)\dd z
\end{equation*}
of $P^{2,n}_{\infty}$, and of the Laplace transform
\begin{equation*}
  L_{\infty}(r) := \int_{x \in \R} \exp(rx) p_{\infty}(x) \dd x
\end{equation*}
of $P_{\infty}$. Following the results of Subsection~\ref{ss:intro:stat}, we can already obtain an explicit expression of $L_{\infty}(r)$; indeed, since the inverse of the cumulative distribution function $F_{\infty}$ of $P_{\infty}$ is $\Phi$, then, for all $r \in \R$,
\begin{equation*}
  L_{\infty}(r) = \int_{u=0}^1 \exp(r\Phi(u)) \dd u.
\end{equation*} 
Besides, under Assumption~\eqref{ass:E}, the point~\eqref{eq:Psiequiv} in Lemma~\ref{lem:Psi} ensures that as soon as $r$ is taken in the set
\begin{equation*}
  \Neigh := \{r \in \R : -2b(0)/\sigma^2 < r < -2b(1)/\sigma^2\},
\end{equation*}
then $L_{\infty}(r) < +\infty$. This is the first part of Theorem~\ref{theo:main}.

\begin{theo}\label{theo:main}
  Under Assumption~\eqref{ass:E},
  \begin{enumerate}[label=(\roman*), ref=\roman*]
    \item\label{it:main:1} for all $r \in \Neigh$, $L_{\infty}(r)$ is finite and writes
    \begin{equation*}
      L_{\infty}(r) = \int_{u=0}^1 \exp(r\Phi(u)) \dd u;
    \end{equation*}
    \item\label{it:main:2} for all $(s,t)$ taken in the set
    \begin{equation*}
      \Neigh_2 := \{(s,t) \in \Neigh \times \Neigh : s+t \in \Neigh\},
    \end{equation*}
    there exists $n_0 \geq 2$ such that, for all $n \geq n_0$,  $L^{2,n}_{\infty}(s,t)$ is finite and writes
    \begin{equation*}
      L^{2,n}_{\infty}(s,t) = \frac{1}{n(n-1)} \sum_{i=1}^n \sum_{j \not= i} J^n_{i,j}(s,t),
    \end{equation*}
    where, for all $1 \leq i < j \leq n$,
    \begin{equation*}
      J^n_{i,j}(s,t) := \prod_{k=1}^{i-1} \frac{1}{1 - (s+t)\frac{\sigma^2}{2n} \frac{k/n}{B(k/n)}}\prod_{k=i}^{j-1} \frac{1}{1 - t\frac{\sigma^2}{2n} \frac{k/n}{B(k/n)} + s\frac{\sigma^2}{2n} \frac{1-k/n}{B(k/n)}}\prod_{k=j}^{n-1} \frac{1}{1 + (s+t)\frac{\sigma^2}{2n} \frac{1-k/n}{B(k/n)}},
    \end{equation*}
    while, for all $1 \leq j < i \leq n$,
    \begin{equation*}
      J^n_{i,j}(s,t) := \prod_{k=1}^{j-1} \frac{1}{1 - (s+t)\frac{\sigma^2}{2n} \frac{k/n}{B(k/n)}}\prod_{k=j}^{i-1} \frac{1}{1 - s\frac{\sigma^2}{2n} \frac{k/n}{B(k/n)} + t\frac{\sigma^2}{2n} \frac{1-k/n}{B(k/n)}}\prod_{k=i}^{n-1} \frac{1}{1 + (s+t)\frac{\sigma^2}{2n} \frac{1-k/n}{B(k/n)}};
    \end{equation*}
    \item\label{it:main:3} for all $(s,t) \in \Neigh_2$,
    \begin{equation*}
      \lim_{n \to +\infty} L^{2,n}_{\infty}(s,t) = L_{\infty}(s)L_{\infty}(t).
    \end{equation*}
  \end{enumerate}
\end{theo}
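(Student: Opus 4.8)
The plan is to argue the three items in turn.

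Item (i) is immediate: since $F_\infty=\Phi^{-1}$, the substitution $x=\Phi(u)$ gives $L_\infty(r)=\int_0^1 e^{r\Phi(u)}\dd u$, and~\eqref{eq:Psiequiv} shows the integrand behaves, up to slowly varying factors, like $u^{r\sigma^2/(2b(0))}$ near $0$ and like $(1-u)^{r\sigma^2/(2b(1))}$ near $1$, so the integral is finite exactly when $r\in\Neigh$.

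For item (ii), the first move is to re-express $P^n_\infty$ through the gaps of the order statistics. Assumption~\eqref{ass:E} gives $\sum_{k=1}^n b_n(k)=nB(1)=0$, so Abel summation together with $\sum_{j=1}^k b_n(j)=nB(k/n)$ rewrites $\sum_k b_n(k)z_{(k)}$ as $-\sum_{k=1}^{n-1}nB(k/n)(z_{(k+1)}-z_{(k)})$. In the coordinates $(s_1,\dots,s_{n-1})$, $s_k=z_{(k+1)}-z_{(k)}\ge 0$, on the sorted region of $M_n$ --- an affine chart in which the surface measure is a constant multiple of Lebesgue measure, $z_{(1)}$ being fixed by $z_1+\dots+z_n=0$ --- the density $p^n_\infty$ becomes proportional to $\prod_{k=1}^{n-1}\exp(-\tfrac{2n}{\sigma^2}B(k/n)s_k)$. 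Since $B(k/n)>0$ for $1\le k\le n-1$ by Lemma~\ref{lem:Psi}(i), this reproves $\mathcal Z_n<+\infty$ and identifies the gaps under $P^n_\infty$ as independent, $s_k$ exponential of rate $2nB(k/n)/\sigma^2$, with $Z_{(1)}$ then fixed by $Z_1+\dots+Z_n=0$. As $p^n_\infty$ depends only on the order statistics, $P^n_\infty$ is exchangeable, so conditionally on the order statistics $(Z_1,Z_2)$ is a uniformly chosen ordered pair of distinct entries; hence
\[
  L^{2,n}_\infty(s,t)=\frac1{n(n-1)}\sum_{i=1}^n\sum_{j\ne i}\Exp\!\left[e^{\,sZ_{(i)}+tZ_{(j)}}\right].
\]
Writing $Z_{(i)}=\sum_{l<i}\tfrac ln s_l-\sum_{l\ge i}\tfrac{n-l}{n}s_l$ and computing the Laplace transform of this linear combination of independent exponentials factor by factor gives $\Exp[e^{sZ_{(i)}+tZ_{(j)}}]=J^n_{i,j}(s,t)$. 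A factor $(1-\theta\sigma^2/(2nB(l/n)))^{-1}$ is finite iff $\theta<2nB(l/n)/\sigma^2$; using $B(v)\sim b(0)v$ as $v\dto0$ and $B(v)\sim-b(1)(1-v)$ as $v\uto1$, one checks that the binding conditions, uniformly in $i,j$, amount exactly to $s,t,s+t\in(-2b(0)/\sigma^2,-2b(1)/\sigma^2)$, i.e.\ $(s,t)\in\Neigh_2$; this produces $n_0$, and summing over $i\ne j$ gives the stated formula.

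For item (iii), I would combine a pointwise limit with a domination on the average $\frac1{n(n-1)}\sum_{i\ne j}J^n_{i,j}(s,t)$. For $u\ne v$ kept away from $0$ and $1$ and $i=\lceil un\rceil$, $j=\lceil vn\rceil$, every summand in the three blocks of logarithms defining $\log J^n_{i,j}(s,t)$ is $O(1/n)$, so $\log(1\pm x)=\pm x+O(x^2)$ applies uniformly and the blocks converge, as Riemann sums, to integrals of $w/B(w)$ and $(1-w)/B(w)$ that recombine, through the definition~\eqref{eq:Psialt} of $\Phi$, into $s\Phi(u)+t\Phi(v)$; hence $J^n_{\lceil un\rceil,\lceil vn\rceil}(s,t)\to e^{s\Phi(u)}e^{t\Phi(v)}$ for a.e.\ $(u,v)$. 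Fixing $\epsilon>0$ and splitting the sum into the bulk block $\epsilon n\le i,j\le(1-\epsilon)n$ and the rest, all factors of $J^n_{i,j}(s,t)$ on the bulk block are $1+O(1/n)$ uniformly, so the summands there are uniformly bounded and dominated convergence gives that the bulk block tends to $\int_\epsilon^{1-\epsilon}e^{s\Phi(u)}\dd u\int_\epsilon^{1-\epsilon}e^{t\Phi(v)}\dd v$, which tends to $L_\infty(s)L_\infty(t)$ as $\epsilon\dto0$ by item (i).

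The main obstacle is the edge block. There $J^n_{i,j}(s,t)$ is not bounded: when an index is within $O(1)$ of $1$ or $n$, the relevant extreme order statistic is heavy-tailed and $J^n_{i,j}(s,t)$ grows like a power of $n$. The estimate to push through is that this growth is at most $(n/d)^{\gamma}$, where $d$ is the distance of the offending index to $\{1,n\}$ and $\gamma<1$, the exponent $\gamma$ being of the form $-\tfrac{\sigma^2}{2b(1)}\rho^+$ or $\tfrac{\sigma^2}{2b(0)}(-\rho)^+$ with $\rho\in\{s,t,s+t\}$ --- each strictly below $1$ precisely because $s,t,s+t$ lie in the \emph{open} set $\Neigh$. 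Summing such bounds over the $O(\epsilon n)$ edge values of each index yields $\sum_{\mathrm{edge}\ (i,j)}J^n_{i,j}(s,t)=O(\epsilon^{\delta}n^2)$ for some $\delta>0$, so the edge block contributes $O(\epsilon^{\delta})$ to $L^{2,n}_\infty(s,t)$, uniformly in $n\ge n_0$. Letting $n\to\infty$ and then $\epsilon\dto0$ gives $\lim_n L^{2,n}_\infty(s,t)=L_\infty(s)L_\infty(t)$.
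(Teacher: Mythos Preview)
Your argument is correct in outline and, for (ii) and (iii), takes a genuinely different route from the paper.

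For (ii), the paper parametrises $M_n$ by dropping the $i$-th coordinate, makes the affine change $y_k=x_k+S$ to decouple the constraint, integrates iteratively, and then eliminates the normalising constant by evaluating at $(s,t)=(0,0)$. Your derivation --- Abel summation on the exponent, the resulting product form showing the gaps $s_k$ are independent exponentials with rate $2nB(k/n)/\sigma^2$, then reading $J^n_{i,j}(s,t)$ off as a product of exponential Laplace transforms --- is the Pal--Pitman representation and is shorter and more conceptual; the product structure of $J^n_{i,j}$ is immediate rather than emerging from iterated integrals.

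For (iii), the paper works in two stages: first $L^{1,n}_\infty(t)\to L_\infty(t)$ via a boundary/central split on $\frac1n\sum_i I^n_i(t)$, then $L^{2,n}_\infty(s,t)-L^{1,n}_\infty(s)L^{1,n}_\infty(t)\to 0$ by writing $\log J^n_{i,j}(s,t)-\log(I^n_i(s)I^n_j(t))$ as a sum of terms $\log(1-a-b)-\log(1-a)-\log(1-b)=O(a^2+b^2)$, so that the two-index edge contribution is controlled by the one-index boundary estimate already proved. Your direct bulk/edge split on the double sum is cleaner to state, but the bound $(n/d)^\gamma$ with the \emph{sharp} exponent $\gamma<1$ that you quote is precisely the output of the paper's one-dimensional boundary analysis: one needs $-\log(1-f^-_{k,n})=f^-_{k,n}+O((f^-_{k,n})^2)$, the linear part summed as a Riemann sum for $(1-v)/B(v)$ with its $\frac{1}{b(0)}\log(1/u)$ asymptotics, and the quadratic part uniformly summable; a cruder inequality such as $-\log(1-x)\le x/(1-\bar\alpha)$ inflates the exponent and can push it past $1$. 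Also, when both indices sit near the same boundary, say $i<j\le\epsilon n$, the middle and outer blocks of $J^n_{i,j}$ carry different exponents (from $s$ and from $s+t$), giving a bound of the shape $(j/i)^{\alpha_-(s)}(\epsilon n/j)^{\alpha_-(s+t)}$ rather than a single power of $n/d$; this still sums to $O(\epsilon^{\delta}n^2)$, but it shows the two-index edge case is not a one-liner. The paper's factorisation through $I^n_i(s)I^n_j(t)$ buys exactly the reduction of this two-index bookkeeping to the one-index case.
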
 

The point~\eqref{it:main:2} is proved in Section~\ref{s:Laplace}, while the point~\eqref{it:main:3} is detailed in Section~\ref{s:cv}.

As is stated in Corollary~\ref{cor:main} below, Theorem~\ref{theo:main} implies the $P_{\infty}$-chaoticity of $P^n_{\infty}$, and actually yields the convergence of $P^{k,n}_{\infty}$ in a stronger sense than in Definition~\ref{defi:chaoticity}; namely, in Wasserstein distance~\cite{villani}. 

\begin{defi}
  Let $k \geq 1$ and $q \in [1, +\infty)$. The {\em Wasserstein distance} of order $q$ between two probability distributions $\mu$ and $\nu$ on $\R^k$ is defined by
  \begin{equation*}
    \Ws_q(\mu, \nu) := \inf_{(X,Y) \in \Pi(\mu,\nu)} \left(\Exp[|X-Y|^q]\right)^{1/q},
  \end{equation*}
  where $\Pi(\mu,\nu)$ refers to the set of pairs of random variables with marginal distributions $\mu$ and $\nu$.
\end{defi}
\begin{rk}\label{rk:q}
  The definition of $\Ws_q$ depends on the choice of the norm $|\cdot|$ on $\R^k$. But since all norms are equivalent on $\R^k$, all the associated distances $\Ws_q$ are also equivalent. Therefore, convergence results for the $\Ws_q$ topology do not depend on the choice of the underlying norm. In this paper, we take the convention that the Wasserstein distance of order $q$ is defined with respect to the $\ell^q$ norm $|x| := (|x_1|^q + \cdots + |x_k|^q)^{1/q}$ on $\R^k$.
\end{rk}

We derive chaoticity and convergence in Wasserstein distance as a corollary of Theorem~\ref{theo:main}.

\begin{cor}\label{cor:main}
  Under Assumption~\eqref{ass:E},
  \begin{enumerate}[label=(\roman*), ref=\roman*]
    \item\label{it:cor:1} the sequence of stationary distributions $P^n_{\infty}$ of the projected particle system is $P_{\infty}$-chaotic,
    \item\label{it:cor:2} for all $k \geq 1$, for all $q \in [1, +\infty)$,
    \begin{equation*}
      \lim_{n \to +\infty} \Ws_q\left(P^{k,n}_{\infty}, (P_{\infty})^{\otimes k}\right) = 0.
    \end{equation*}
  \end{enumerate}  
\end{cor}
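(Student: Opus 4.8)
The plan is to deduce both statements from Theorem~\ref{theo:main} by a moment/Laplace-transform argument, exploiting the symmetry of $P^n_\infty$. First I would observe that because $P^n_\infty$ is symmetric, establishing $P_\infty$-chaoticity reduces (by Sznitman's characterisation, or directly by a standard argument) to showing that the two-dimensional marginal $P^{2,n}_\infty$ converges weakly to $P_\infty\otimes P_\infty$; indeed the symmetry forces the empirical measure to be the relevant object, and convergence of the two-particle marginal to a product already encodes both the convergence of each one-particle marginal to $P_\infty$ (take $t=0$) and the asymptotic independence. To get weak convergence of $P^{2,n}_\infty$ to $P_\infty^{\otimes 2}$ from Theorem~\ref{theo:main}\eqref{it:main:3}, I would note that pointwise convergence of Laplace transforms on an open neighbourhood of the origin in $\R^2$ (here $\Neigh_2$, which contains $0$ since $\Neigh$ does) implies convergence in distribution: the limit $L_\infty(s)L_\infty(t)$ is the Laplace transform of $P_\infty^{\otimes 2}$, it is finite on an open set containing $0$, hence $P_\infty^{\otimes 2}$ is determined by it, and pointwise convergence of analytic Laplace transforms on such a set yields tightness plus identification of the limit. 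This gives \eqref{it:cor:1}.

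For \eqref{it:cor:2}, the key extra input is uniform integrability of the $q$-th moments. I would first record that convergence in $\Ws_q$ on $\R^k$ is equivalent to weak convergence together with convergence of the $q$-th moments $\int |x|^q\, dP^{k,n}_\infty$; since weak convergence of $P^{k,n}_\infty$ to $P_\infty^{\otimes k}$ follows from \eqref{it:cor:1} (chaoticity gives weak convergence of every finite marginal), it suffices to prove that, for each fixed $k$ and each $q$, the family $\{|X_1|^q + \cdots + |X_k|^q\}_n$ under $P^{k,n}_\infty$ is uniformly integrable, or even just that $\sup_n \int |x|^{q'}\, dP^{k,n}_\infty < \infty$ for some $q' > q$. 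By symmetry and subadditivity this reduces to a bound on $\sup_n \Exp_{P^n_\infty}[|z_1|^{q'}]$ for one coordinate. To obtain such a bound I would use the explicit one-dimensional Laplace transform: from \eqref{it:main:2} with $t=0$ (or its direct one-particle analogue) one has a product formula for $L^{1,n}_\infty(s) = \Exp_{P^n_\infty}[e^{sz_1}]$, finite for all $s\in\Neigh$, and I would show that this family of Laplace transforms is uniformly bounded on a fixed compact neighbourhood of $0$ inside $\Neigh$ — this is where the asymptotics $B(k/n)\sim b(0)(k/n)$ and $B(k/n)\sim -b(1)(1-k/n)$ near the endpoints, together with $B(v)>0$ on $(0,1)$, make each factor $\le 1/(1 - c s \sigma^2/(2n))$-type and the product converge to $L_\infty(s)<\infty$ uniformly in $n\ge n_0$. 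A uniform bound on Laplace transforms near $0$ gives a uniform bound on all exponential moments, hence on all polynomial moments, which is precisely the uniform integrability needed.

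The main obstacle I anticipate is the uniform-in-$n$ moment bound underlying \eqref{it:cor:2}: one must control the product defining $L^{1,n}_\infty(s)$ (or $J^n_{i,j}$ summed and averaged) uniformly in $n$ on a fixed neighbourhood of the origin, which requires handling the factors indexed by $k$ close to $0$ and close to $n$, where $k/n / B(k/n)$ and $(1-k/n)/B(k/n)$ are largest. The decreasing monotonicity of $b$ in Assumption~\eqref{ass:E} and the resulting bound $B(v)\ge \min(b(0)v, -b(1)(1-v))\cdot$(const), or more simply $B(v) \ge $ const$\cdot v(1-v)$ after using concavity-type estimates, should give $1 - s\frac{\sigma^2}{2n}\frac{k/n}{B(k/n)} \ge 1 - C|s|/k$ uniformly, and then $\prod_{k\ge 1}(1 - C|s|/k)^{-1}$-type products, suitably truncated, converge; passing from $\log$ of the product to a Riemann-sum comparison with $\int_0^1 r\,\sigma^2/(2B(v))\,dv$ gives the uniform bound. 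Once this technical estimate is in place, everything else is the soft functional-analytic machinery (Laplace transforms determine measures, pointwise convergence plus uniform moment bounds give $\Ws_q$-convergence) and goes through routinely.
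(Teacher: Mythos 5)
Your overall strategy matches the paper's: chaoticity is deduced from the convergence $L^{2,n}_{\infty}(s,t)\to L_{\infty}(s)L_{\infty}(t)$ on the open set $\Neigh_2\ni(0,0)$, using Sznitman's reduction to the two-particle marginal, and Wasserstein convergence is obtained from weak convergence plus a uniform moment bound for a single coordinate (symmetry reducing $k\geq 1$ to $k=1$), exactly as in the paper via \cite[Theorem~6.9]{villani}. The one place where you diverge is the moment bound itself, which you flag as the main obstacle and propose to settle by re-deriving a uniform-in-$n$ bound for $L^{1,n}_{\infty}$ on a whole compact neighbourhood of $0$ through fresh estimates on the product formula. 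This is workable but unnecessary: the paper simply fixes $r>q$ and $\rho>0$ with $\pm\rho\in\Neigh$, uses the elementary inequality $|x|^r\leq M+\exp(-\rho x)+\exp(\rho x)$, and notes that $\Exp[|X_1^n|^r]\leq M+L^{2,n}_{\infty}(-\rho,0)+L^{2,n}_{\infty}(\rho,0)$, where the right-hand side converges by point~(\ref{it:main:3}) of Theorem~\ref{theo:main} and is therefore bounded in $n$; a convergent sequence of reals is bounded, so no uniformity over a neighbourhood and no new product estimates are needed. In short, your proposal is correct, but the technical work you anticipate at the end is already contained in the pointwise convergence you have from Theorem~\ref{theo:main}, evaluated at just two points $\pm\rho$.
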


The proof of Corollary~\ref{cor:main} is postponed to Appendix~\ref{s:app}. A summary of the long time and large scale behaviour of the projected particle system is detailed on Figure~\ref{fig:summary}.

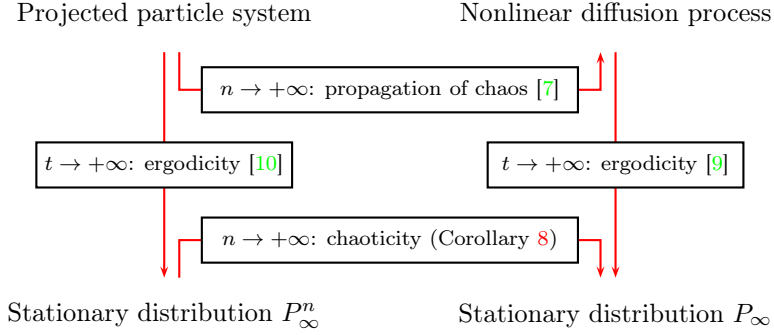
\begin{figure}[ht]
  \begin{pspicture}(10,4)
    \rput(2,4){Projected particle system}
    \rput(2,0){Stationary distribution $P^n_{\infty}$}
    \rput(8,4){Nonlinear diffusion process}
    \rput(8,0){Stationary distribution $P_{\infty}$}
    
    \psline[linecolor=red]{->}(2,3.5)(2,.5)
    \psline[linecolor=red]{->}(8,3.5)(8,.5)
    \psline[linecolor=red]{->}(2.2,3.5)(2.2,3)(7.8,3)(7.8,3.5)
    \psline[linecolor=red]{->}(2.2,.5)(2.2,1)(7.8,1)(7.8,.5)
    
    \pspolygon[fillstyle=solid](2.5,3.3)(2.5,2.7)(7.5,2.7)(7.5,3.3)
    \rput(5,3){\footnotesize $n \to +\infty$: propagation of chaos \cite{jm}}
    \pspolygon[fillstyle=solid](2.5,1.3)(2.5,.7)(7.5,.7)(7.5,1.3)
    \rput(5,1){\footnotesize $n \to +\infty$: chaoticity (Corollary~\ref{cor:main})}
    
    \pspolygon[fillstyle=solid](.3,2.3)(.3,1.7)(3.7,1.7)(3.7,2.3)
    \rput(2,2){\footnotesize $t \to +\infty$: ergodicity~\cite{pp}}
    \pspolygon[fillstyle=solid](6.3,2.3)(6.3,1.7)(9.7,1.7)(9.7,2.3)
    \rput(8,2){\footnotesize $t \to +\infty$: ergodicity~\cite{jourey}}

  \end{pspicture}
  \caption{A summary of convergence results, in long time as well as for a large number of particles, for the projected particle system.}
  \label{fig:summary}
\end{figure}

Figure~\ref{fig:summary} illustrates the fact that, when it makes sense, the interversion of the limits `$n \to +\infty$' and `$t \to +\infty$' is generically correct for functionals of systems of rank-based interacting particles. This remark is of interest in the study of rank-based models of equity markets, such as the Atlas model introduced by Fernholz in the framework of Stochastic Portfolio Theory~\cite{fernholz, banner, ferkar}. Indeed, in this context, relevant quantities such as capital distribution curves or growth rates of portfolios are expressed in terms of the stationary distribution $P^n_{\infty}$ described in Proposition~\ref{prop:pp}. The asymptotic behaviour of these quantities, when the size of the market grows to infinity, where investigated in~\cite{banner, cp}. On the other hand, it was suggested in~\cite{jourey:atlas} to use the propagation of chaos results of~\cite{jourey} to obtain a functional description of an infinite market first, and then apply the available ergodicity results on the nonlinear diffusion process to derive closed formulas for these relevant quantities. Corollary~\ref{cor:main} is a first step toward the validation of the equivalence of both approaches, and we refer to~\cite{jourey:atlas} for a detailed account.

\subsection{Notations} Throughout the paper, we use the following notations: for all $s,t \in \R$, $s \wedge t := \min\{s,t\}$, $s \vee t := \max\{s,t\}$, $[s]^- := 0 \vee (-s)$ and $[s]^+ := 0 \vee s$. Besides, $\lfloor s \rfloor$ denotes the integer part of $s$.


\section{Expression of the Laplace transforms}\label{s:Laplace}

This section is dedicated to the proof the point~\eqref{it:main:2} of Theorem~\ref{theo:main}. We first collect preliminary estimates in Subsection~\ref{ss:prelim}, and then compute the Laplace transform $L^{2,n}_{\infty}(s,t)$ of $P^{2,n}_{\infty}$ in Subsection~\ref{ss:L2ninfty}.

\subsection{Preliminary estimates}\label{ss:prelim} Under Assumption~\eqref{ass:E}, for all $r \in \Neigh$, for all $n \geq 1$, for all $k \in \{1, \ldots, n-1\}$, let us define
\begin{equation*}
  f^+_{k,n}(r) := r \frac{\sigma^2}{2n}\frac{k/n}{B(k/n)}, \qquad f^-_{k,n}(r) := -r \frac{\sigma^2}{2n}\frac{1-k/n}{B(k/n)},
\end{equation*}
so that the quantities $J^n_{i,j}(s,t)$ introduced in Theorem~\ref{theo:main} rewrite
\begin{equation*}
  \begin{aligned}
    & J^n_{i,j}(s,t) := \prod_{k=1}^{i-1} \frac{1}{1 - f^+_{k,n}(s+t)}\prod_{k=i}^{j-1} \frac{1}{1 - f^+_{k,n}(t) - f^-_{k,n}(s)}\prod_{k=j}^{n-1} \frac{1}{1 - f^-_{k,n}(s+t)} \qquad \text{if $i < j$,}\\
    & J^n_{i,j}(s,t) := \prod_{k=1}^{j-1} \frac{1}{1 - f^+_{k,n}(s+t)}\prod_{k=j}^{i-1} \frac{1}{1 - f^+_{k,n}(s) - f^-_{k,n}(t)}\prod_{k=i}^{n-1} \frac{1}{1 - f^-_{k,n}(s+t)} \qquad \text{if $i > j$.}
  \end{aligned}
\end{equation*}

In this subsection, we exhibit upper bounds on the quantities $f^+_{k,n}(r)$ and $f^-_{k,n}(r)$, for $r \in \{s,t,s+t\}$, which ensure that the quantities $J^n_{i,j}(s,t)$ are well defined for $n$ large enough. We roughly proceed as follows: when $k/n$ is far from $1$, then $\frac{k/n}{B(k/n)}$ remains bounded by above, so that $f^+_{k,n}(r)$ is arbitrarily small for $n$ large enough. On the contrary, when $k/n$ is close to $1$, then 
\begin{equation*}
  \frac{k/n}{B(k/n)} \simeq -\frac{1}{b(1)(1-k/n)},
\end{equation*}
so that 
\begin{equation*}
  f^+_{k,n}(r) \simeq -r\frac{\sigma^2}{2b(1)}\frac{1}{n-k},
\end{equation*}
and the fact that $r \in \Neigh$ provides natural bounds on the right-hand side. The same ideas allow to obtain similar bounds on $f^-_{k,n}(r)$.

We now give a rigorous formulation of these arguments. Under Assumption~\eqref{ass:E}, for all $\epsilon > 0$ such that $\epsilon < b(0) \wedge (-b(1))$, we introduce 
\begin{equation*}
  \begin{aligned}
    & \Neigh^{\epsilon} := \{r \in \R : -2(b(0)-\epsilon)/\sigma^2 < r < 2(-b(1)-\epsilon)/\sigma^2\},\\
    & \Neigh^{\epsilon}_2 := \{(s,t) \in \Neigh^{\epsilon} \times \Neigh^{\epsilon} : s+t \in \Neigh^{\epsilon}\}.
  \end{aligned}
\end{equation*}
For all $r \in \Neigh^{\epsilon}$, we define
\begin{equation*}
  \alpha_+(r) := \frac{[r]^+\sigma^2}{2(-b(1)-\epsilon)} \in [0,1), \qquad \alpha_-(r) := \frac{[r]^-\sigma^2}{2(b(0)-\epsilon)} \in [0,1).
\end{equation*}

Let us now fix $(s,t) \in \Neigh^{\epsilon}_2$. Then, for $\delta \in (0,1/2)$ small enough,
\begin{itemize}
  \item for all $u \in [0,\delta]$, $B(u) \geq u(b(0)-\epsilon)$,
  \item for all $u \in [1-\delta,1]$, $B(u) \geq (1-u)(-b(1)-\epsilon)$.
\end{itemize}
Besides, by Assumption~\eqref{ass:E}, we have
\begin{equation*}
  m_-(\delta) := \inf_{u \in [0,1-\delta]} \frac{B(u)}{u} > 0, \qquad m_+(\delta) := \inf_{u \in [\delta,1]} \frac{B(u)}{1-u} > 0.
\end{equation*}

The heuristic arguments detailed at the beginning of the subsection translate into the following precise estimates: for all $r \in \{s,t,s+t\}$, for all $n \geq 1$, for all $k \in \{1, \ldots, n-1\}$,  
\begin{equation}\label{eq:fplus}
  f^+_{k,n}(r) = r \frac{\sigma^2}{2n}\frac{k/n}{B(k/n)} \leq \begin{cases}
    \displaystyle\frac{[r]^+\sigma^2}{2nm_-(\delta)} & \text{if $k < n(1-\delta)$,}\\
    \displaystyle\frac{[r]^+\sigma^2}{2(n-k)}\frac{k/n}{-b(1)-\epsilon} \leq \frac{\alpha_+(r)}{n-k} & \text{if $k \geq n(1-\delta)$.}
  \end{cases}
\end{equation}
Similarly,
\begin{equation}\label{eq:fmoins}
  f^-_{k,n}(r) = -r \frac{\sigma^2}{2n}\frac{1-k/n}{B(k/n)} \leq \begin{cases}
    \displaystyle\frac{[r]^-\sigma^2}{2nm_+(\delta)} & \text{if $k > n\delta$,}\\
    \displaystyle\frac{[r]^-\sigma^2}{2k}\frac{1-k/n}{b(0)-\epsilon} \leq \frac{\alpha_-(r)}{k} & \text{if $k \leq n\delta$.}
  \end{cases}
\end{equation}

In particular, if $n$ is chosen so that
\begin{equation*}
  \frac{[s+t]^+\sigma^2}{2nm_-(\delta)} \leq \frac{1}{2},
\end{equation*}
then we deduce from~\eqref{eq:fplus} that, for all $k \in \{1, \ldots, n-1\}$,
\begin{equation*}
  f^+_{k,n}(s+t) \leq \begin{cases}
    1/2 & \text{if $k < n(1-\delta)$,}\\
    \alpha_+(s+t) & \text{if $k \geq n(1-\delta)$.}
  \end{cases}
\end{equation*}
Similarly, if $n$ is chosen so that
\begin{equation*}
  \frac{[t]^+\sigma^2}{2nm_-(\delta)} + \alpha_-(s) \leq \frac{\alpha_-(s)+1}{2}, \quad \frac{[t]^+\sigma^2}{2nm_-(\delta)} + \frac{[s]^-\sigma^2}{2nm_+(\delta)} \leq \frac{1}{2}, \quad \alpha_+(t) + \frac{[s]^-\sigma^2}{2nm_+(\delta)} \leq \frac{\alpha_+(t)+1}{2},
\end{equation*}
then we deduce from~\eqref{eq:fplus} and~\eqref{eq:fmoins} that, for all $k \in \{1, \ldots, n-1\}$,
\begin{equation*}
  f^+_{k,n}(t)+f^-_{k,n}(s) \leq \begin{cases}
    (\alpha_-(s)+1)/2 & \text{if $k \leq n\delta$,}\\
    1/2 & \text{if $n\delta < k < n(1-\delta)$,}\\
    (\alpha_+(t)+1)/2 & \text{if $k \geq n(1-\delta)$.}
  \end{cases}
\end{equation*}

These results are gathered together in the following lemma.

\begin{lem}\label{lem:prelim}
  Let $(s,t) \in \Neigh_2$. Under Assumption~\eqref{ass:E}, there exists $\epsilon > 0$ such that $(s,t) \in \Neigh_2^{\epsilon}$. Let $\delta \in (0,1/2)$ satisfying the conditions above. Let us define $\bar{\alpha} \in (0,1)$ by
  \begin{equation*}
    \bar{\alpha} := \max\left\{\frac{1}{2}, \alpha_+(s+t), \alpha_-(s+t), \frac{\alpha_+(s)+1}{2}, \frac{\alpha_-(s)+1}{2}, \frac{\alpha_+(t)+1}{2}, \frac{\alpha_-(t)+1}{2}\right\}.
  \end{equation*} 
  Then, there exists $n_0 \geq 2$ such that, for all $n \geq n_0$, for all $k \in \{1, \ldots, n-1\}$, the quantities
  \begin{equation*}
    \begin{aligned}
      & 1 - f^+_{k,n}(s+t), \quad 1 - f^+_{k,n}(t) - f^-_{k,n}(s), \quad 1 - f^+_{k,n}(s) - f^-_{k,n}(t), \quad 1 - f^-_{k,n}(s+t)
    \end{aligned}
  \end{equation*}
  are larger than $1-\bar{\alpha}$.
\end{lem}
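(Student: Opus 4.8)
The statement is a repackaging of the estimates \eqref{eq:fplus}--\eqref{eq:fmoins} established above, so the plan is to pin down the choices of $\epsilon$, $\delta$ and $n_0$ and then assemble the bounds. First I would choose $\epsilon$. Since $(s,t) \in \Neigh_2$, the three numbers $s$, $t$ and $s+t$ all lie in the open interval $\Neigh = (-2b(0)/\sigma^2, -2b(1)/\sigma^2)$, which is nonempty because $b(0) > 0 > b(1)$ by Lemma~\ref{lem:Psi}. The intervals $\Neigh^{\epsilon}$ increase to $\Neigh$ as $\epsilon \dto 0$, so for $\epsilon$ small enough — in particular with $\epsilon < b(0) \wedge (-b(1))$, as required for $\Neigh^{\epsilon}$ to be defined — one has $s, t, s+t \in \Neigh^{\epsilon}$, i.e. $(s,t) \in \Neigh_2^{\epsilon}$. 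Fix such an $\epsilon$.

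Next I would choose $\delta$. From the proof of Lemma~\ref{lem:Psi}, $B(u)/u \to b(0)$ as $u \dto 0$ and $B(u)/(1-u) \to -b(1)$ as $u \uto 1$, so some $\delta \in (0,1/2)$ satisfies $B(u) \geq u(b(0)-\epsilon)$ on $[0,\delta]$ and $B(u) \geq (1-u)(-b(1)-\epsilon)$ on $[1-\delta,1]$. Moreover, by Assumption~\eqref{ass:E} the functions $u \mapsto B(u)/u$ and $u \mapsto B(u)/(1-u)$ extend continuously (using these limits) and are strictly positive on $[0,1-\delta]$ and $[\delta,1]$ respectively, so $m_-(\delta)$ and $m_+(\delta)$ are attained and strictly positive. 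With this $\delta$, the inequalities \eqref{eq:fplus} and \eqref{eq:fmoins} hold for every $n \geq 1$ and every $k \in \{1,\ldots,n-1\}$ with $r \in \{s,t,s+t\}$; exchanging the roles of $s$ and $t$ — legitimate since $(t,s) \in \Neigh_2^{\epsilon}$ as well — also yields the bounds on $f^-_{k,n}(t)$ and on $f^+_{k,n}(s) + f^-_{k,n}(t)$ that mirror those for $f^+_{k,n}(t) + f^-_{k,n}(s)$.

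Finally I would choose $n_0$. Each smallness condition on $n$ displayed before the lemma, together with the one for $f^-_{k,n}(s+t)$ (namely $[s+t]^-\sigma^2/(2nm_+(\delta)) \leq 1/2$) and those obtained by the $s \leftrightarrow t$ swap, has the form $C/n \leq c$ with $C \geq 0$ and $c > 0$, hence holds for all $n$ beyond a finite threshold; let $n_0 \geq 2$ exceed all these finitely many thresholds. Then for $n \geq n_0$ and $k \in \{1,\ldots,n-1\}$ the case analyses give $f^+_{k,n}(s+t) \leq \max\{1/2, \alpha_+(s+t)\}$, $f^-_{k,n}(s+t) \leq \max\{1/2, \alpha_-(s+t)\}$, $f^+_{k,n}(t)+f^-_{k,n}(s) \leq \max\{1/2, (\alpha_-(s)+1)/2, (\alpha_+(t)+1)/2\}$, and symmetrically $f^+_{k,n}(s)+f^-_{k,n}(t) \leq \max\{1/2, (\alpha_-(t)+1)/2, (\alpha_+(s)+1)/2\}$. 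Each right-hand side is $\leq \bar{\alpha}$ by definition of $\bar{\alpha}$, and $\bar{\alpha} < 1$ because every $\alpha_{\pm}(r)$ lies in $[0,1)$; subtracting from $1$ gives the claimed lower bound $1 - \bar{\alpha}$ for all four quantities.

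The only points requiring any care are the strict positivity of $m_{\pm}(\delta)$ — which rests on $B > 0$ on $(0,1)$ from Lemma~\ref{lem:Psi} — and the observation that only finitely many conditions are imposed on $n$, so that a single $n_0$ works uniformly in $k$; there is otherwise no genuine obstacle, as the lemma is purely a consolidation of the preceding estimates.
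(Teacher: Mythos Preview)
Your proposal is correct and follows exactly the approach of the paper: the lemma is explicitly introduced there as a consolidation of the estimates~\eqref{eq:fplus}--\eqref{eq:fmoins} and the case analyses worked out just before its statement, and you reproduce that assembly faithfully, including the choice of $\epsilon$, $\delta$, $n_0$ and the $s \leftrightarrow t$ symmetry needed for $f^+_{k,n}(s)+f^-_{k,n}(t)$ and $f^-_{k,n}(s+t)$.
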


\subsection{Computation of \texorpdfstring{$L^{2,n}_{\infty}(s,t)$}{L2n}}\label{ss:L2ninfty} Let us first note that, since $p^n_{\infty}(z)\dd z$ is a symmetric probability distribution on $\R^n$, then for all symmetric and nonnegative function $f : \R^n \to \R$,
\begin{equation*}
  \int_{z \in M_n} f(z_1, \ldots, z_n) p^n_{\infty}(z_1, \ldots, z_n) \dd z = \int_{z \in M_n} f(z_1, \ldots, z_n) \tilde{p}^n_{\infty}(z_1, \ldots, z_n) \dd z,
\end{equation*}
where, for all $z=(z_1, \ldots, z_n) \in M_n$,
\begin{equation*}
  \tilde{p}^n_{\infty}(z_1, \ldots, z_n) = n!\ind{z_1 \leq \cdots \leq z_n} \frac{1}{\mathcal{Z}_n} \exp\left(\frac{2}{\sigma^2} \sum_{k=1}^n b_n(k) z_k\right).
\end{equation*}
Using the symmetry of $p^n_{\infty}(z)\dd z$ again, we deduce that, for all $(s,t) \in \Neigh_2$,
\begin{equation*}
  L^{2,n}_{\infty}(s,t) = \frac{1}{n(n-1)} \sum_{i=1}^n \sum_{j\not=i} \int_{z \in M_n} \exp(sz_i + tz_j) \tilde{p}^n_{\infty}(z)\dd z.
\end{equation*}
Let us now fix $i \in \{1, \ldots, n\}$ and $j \not= i$, and define
\begin{equation*}
  J^n_{i,j}(s,t) := \int_{z \in M_n} \exp(sz_i + tz_j) \tilde{p}^n_{\infty}(z)\dd z.
\end{equation*}
Note that, at this stage, nothing prevents $J^n_{i,j}(s,t)$ from being infinite. Using the parametrisation of $M_n$ by the $n-1$ coordinates $x_1 = z_1, \ldots, x_{i-1} = z_{i-1}, x_{i+1} = z_{i+1}, \ldots, x_n = z_n$, so that the surface measure $\dd z$ on $M_n$ rewrites $\sqrt{n} \dd x_1 \cdots \dd x_{i-1} \dd x_{i+1} \cdots \dd x_n$, we obtain
\begin{equation*}
  \begin{aligned}
    J^n_{i,j}(s,t) & = \frac{n!\sqrt{n}}{\mathcal{Z}_n} \int_{(x_1, \ldots, x_{i-1}, x_{i+1}, \ldots, x_n) \in \R^{n-1}} \ind{x_1 \leq \cdots \leq x_{i-1} \leq -(x_1 + \cdots + x_{i-1} + x_{i+1} + \cdots + x_n) \leq x_{i+1} \leq \cdots \leq x_n}\\
    & \quad \times \exp\left(tx_j + \sum_{k \not=i} \left(\frac{2}{\sigma^2}(b_n(k)-b_n(i))-s\right)x_k\right) \dd x_1 \cdots \dd x_{i-1} \dd x_{i+1} \cdots \dd x_n.
  \end{aligned}
\end{equation*}
We denote $S := x_1 + \cdots + x_{i-1} + x_{i+1} + \cdots + x_n$, and let $y_k := x_k + S$, for all $k \not= i$, in the right-hand side above. Then we have
\begin{equation*}
  \ind{x_1 \leq \cdots \leq x_{i-1} \leq -(x_1 + \cdots + x_{i-1} + x_{i+1} + \cdots + x_n) \leq x_{i+1} \leq \cdots \leq x_n} = \ind{y_1 \leq \cdots \leq y_{i-1} \leq 0 \leq y_{i+1} \leq \cdots \leq y_n},
\end{equation*}
and
\begin{equation*}
  \begin{aligned}
    tx_j + \sum_{k \not=i} \left(\frac{2}{\sigma^2}(b_n(k)-b_n(i))-s\right)x_k & = ty_j + \sum_{k \not=i} \left(\frac{2}{\sigma^2}(b_n(k)-b_n(i))-s\right)y_k\\
    & \quad - S\left(t + \sum_{k \not=i} \left(\frac{2}{\sigma^2}(b_n(k)-b_n(i))-s\right)\right).
  \end{aligned}
\end{equation*}
Note that $S = \sum_{k\not=i} (y_k-S)$, which implies $S = \frac{1}{n} \sum_{k\not=i} y_k$. Besides,
\begin{equation*}
  \sum_{k \not=i} \left(\frac{2}{\sigma^2}(b_n(k)-b_n(i))-s\right) = \sum_{k \not=i} \frac{2}{\sigma^2}b_n(k)-(n-1)\frac{2}{\sigma^2}b_n(i)-(n-1)s = -n\frac{2}{\sigma^2}b_n(i)-(n-1)s,
\end{equation*}
since $\sum_{k=1}^n b_n(k) = n\bar{b} = 0$ by Assumption~\eqref{ass:E}. As a consequence,
\begin{equation*}
  tx_j + \sum_{k \not=i} \left(\frac{2}{\sigma^2}(b_n(k)-b_n(i))-s\right)x_k = \sum_{k\not=i} \gamma^{k,n}_{i,j}(s,t) y_k,
\end{equation*}
where
\begin{equation*}
  \gamma^{k,n}_{i,j}(s,t) := -\frac{s+t}{n} + \frac{2}{\sigma^2}b_n(k) + \ind{k=j} t.
\end{equation*}
As a conclusion,
\begin{equation*}
  \begin{aligned}
    J^n_{i,j}(s,t) & = \frac{n!\sqrt{n}}{n\mathcal{Z}_n} \int_{(y_1, \ldots, y_{i-1}, y_{i+1}, \ldots, y_n) \in \R^{n-1}} \ind{y_1 \leq \cdots \leq y_{i-1} \leq 0 \leq y_{i+1} \leq \cdots \leq y_n}\\
    & \quad \times \exp\left(\sum_{k\not=i} \gamma^{k,n}_{i,j}(s,t) y_k\right) \dd y_1 \cdots \dd y_{i-1} \dd y_{i+1} \cdots \dd y_n\\
    & = \frac{n!\sqrt{n}}{n\mathcal{Z}_n} J^{-,n}_{i,j}(s,t)J^{+,n}_{i,j}(s,t),
  \end{aligned}
\end{equation*}
where
\begin{equation*}
  \begin{aligned}
    & J^{-,n}_{i,j}(s,t) := \int_{y_{i-1}=-\infty}^0 \int_{y_{i-2}=-\infty}^{y_{i-1}} \cdots \int_{y_1=-\infty}^{y_2} \exp(\gamma^{1,n}_{i,j}(s,t) y_1 + \cdots + \gamma^{i-1,n}_{i,j}(s,t) y_{i-1}) \dd y_1 \cdots \dd y_{i-1},\\
    & J^{+,n}_{i,j}(s,t) := \int_{y_{i+1}=0}^{+\infty} \int_{y_{i+2}=y_{i+1}}^{+\infty} \cdots \int_{y_n=y_{n-1}}^{+\infty} \exp(\gamma^{n,n}_{i,j}(s,t) y_n + \cdots + \gamma^{i+1,n}_{i,j}(s,t) y_{i+1}) \dd y_n \cdots \dd y_{i+1}.
  \end{aligned}
\end{equation*}

Let $n_0 \geq 2$ and $\bar{\alpha} \in (0,1)$ be given by Lemma~\ref{lem:prelim}. We deduce from the definition of $\gamma^{k,n}_{i,j}(s,t)$ that, if $n \geq n_0$, then, for all $i,j \in \{1, \ldots, n\}$ such that $i \not= j$, for all $k \in \{1, \ldots, i-1\}$,
\begin{equation*}
  \begin{aligned}
    \gamma^{1,n}_{i,j}(s,t) + \cdots + \gamma^{k,n}_{i,j}(s,t) & = -(s+t)\frac{k}{n} + \frac{2n}{\sigma^2}B\left(\frac{k}{n}\right)\\
    & = \frac{2n}{\sigma^2}B\left(\frac{k}{n}\right)(1-f^+_{k,n}(s+t))\\
    & \geq \frac{2n}{\sigma^2}B\left(\frac{k}{n}\right)(1-\bar{\alpha}) > 0,
  \end{aligned}
\end{equation*}
and similarly, for all $k \in \{i+1, \ldots, n\}$,
\begin{equation*}
  \begin{aligned}
    \gamma^{k,n}_{i,j}(s,t) + \cdots + \gamma^{n,n}_{i,j}(s,t) & = -(s+t)\frac{n-k+1}{n} + t - \frac{2n}{\sigma^2}B\left(\frac{k-1}{n}\right)\\
    & = -\frac{2n}{\sigma^2}B\left(\frac{k-1}{n}\right)(1-f^+_{k-1,n}(t)-f^-_{k-1,n}(s))\\
    & \leq -\frac{2n}{\sigma^2}B\left(\frac{k-1}{n}\right)(1-\bar{\alpha}) < 0,
  \end{aligned}
\end{equation*}
which ensures that $J^{-,n}_{i,j}(s,t)$ and $J^{+,n}_{i,j}(s,t)$ are finite and write
\begin{equation*}
  J^{-,n}_{i,j}(s,t) = \prod_{k=1}^{i-1} \frac{1}{\gamma^{1,n}_{i,j}(s,t) + \cdots + \gamma^{k,n}_{i,j}(s,t)}, \qquad J^{+,n}_{i,j}(s,t) = \prod_{k=i+1}^n \frac{-1}{\gamma^{n,n}_{i,j}(s,t) + \cdots + \gamma^{k,n}_{i,j}(s,t)},
\end{equation*}
thanks to successive integrations. This finally gives
\begin{equation*}
  J^n_{i,j}(s,t) = \frac{n!\sqrt{n}}{n\mathcal{Z}_n} \prod_{k=1}^{i-1} \frac{1}{\frac{-(s+t)k}{n} + \frac{2n}{\sigma^2}B(\frac{k}{n})} \prod_{k=i}^{j-1} \frac{1}{\frac{-tk + s(n-k)}{n} + \frac{2n}{\sigma^2}B(\frac{k}{n})} \prod_{k=j}^{n-1} \frac{1}{\frac{(s+t)(n-k)}{n} + \frac{2n}{\sigma^2}B(\frac{k}{n})}
\end{equation*}
if $i < j$, and
\begin{equation*}
  J^n_{i,j}(s,t) = \frac{n!\sqrt{n}}{n\mathcal{Z}_n} \prod_{k=1}^{j-1} \frac{1}{\frac{-(s+t)k}{n} + \frac{2n}{\sigma^2}B(\frac{k}{n})} \prod_{k=j}^{i-1} \frac{1}{\frac{-sk + t(n-k)}{n} + \frac{2n}{\sigma^2}B(\frac{k}{n})} \prod_{k=i}^{n-1} \frac{1}{\frac{(s+t)(n-k)}{n} + \frac{2n}{\sigma^2}B(\frac{k}{n})}
\end{equation*}
if $i > j$.

To complete the proof, we remark that
\begin{equation*}
  L^{2,n}_{\infty}(0,0) = 1 = \frac{n!\sqrt{n}}{n\mathcal{Z}_n} \prod_{k=1}^{n-1} \frac{1}{\frac{2n}{\sigma^2}B(\frac{k}{n})},
\end{equation*}
which allows us to get rid of the constant term $\frac{n!\sqrt{n}}{n\mathcal{Z}_n}$ and to obtain the expected expression of $J^n_{i,j}(s,t)$ in the point~\eqref{it:main:2} of Theorem~\ref{theo:main}, for $(s,t) \in \Neigh_2$.


\section{Convergence of the Laplace transforms}\label{s:cv}

The proof of the point~\eqref{it:main:3} of Theorem~\ref{theo:main} works in two steps: first, we prove that, for all $t \in \Neigh$, the Laplace transform $L^{1,n}_{\infty}(t) = L^{2,n}_{\infty}(t,0)$ of $P^{1,n}_{\infty}$ converges to $L_{\infty}(t)$. Second, we check that, for $(s,t) \in \Neigh_2$, the difference between $L^{2,n}_{\infty}(s,t)$ and the product $L^{1,n}_{\infty}(s)L^{1,n}_{\infty}(t)$ vanishes. These two steps are addressed in the respective Subsections~\ref{ss:step1} and~\ref{ss:step2}. The preliminary Subsection~\ref{ss:TL} gathers useful elementary results.

\subsection{Elementary inequalities}\label{ss:TL} We shall use the following inequalities, which are elementary consequences of the Taylor-Lagrange inequality.

\begin{enumerate}[label=(TL\arabic*), ref=TL\arabic*]
  \item\label{it:TL:log} For all $\alpha \in [0,1)$, for all $x \in [-\alpha, +\infty)$, $|\log(1+x)-x| \leq \kappa(\alpha)x^2$, where $\kappa(\alpha) := \frac{1}{2(1-\alpha)^2}$.
  \item\label{it:TL:exp} For all $x,y \in \R$, $|\exp(x)-\exp(y)| \leq \exp(y)\left(|x-y|+|R(x-y)|\right)$, where the function $R : z \mapsto \exp(z)-1-z$ is such that, for all $C \in [0,+\infty)$, for all $z \in [-C,C]$, $|R(z)| \leq \frac{1}{2}\exp(C) z^2$.
\end{enumerate}
In particular, we deduce from~\eqref{it:TL:log} that, for all $C \in [0,+\infty)$,
\begin{equation}\label{eq:limsup}
  \lim_{n \to +\infty} \left(1-\frac{C}{n}\right)^{-n} = \exp(C).
\end{equation}

\subsection{Convergence of \texorpdfstring{$L^{1,n}_{\infty}(t)$}{L1n}}\label{ss:step1} Let us fix $t \in \Neigh$. By the results of Section~\ref{s:Laplace}, there exists $n_0 \geq 2$ such that, for all $n \geq n_0$, the Laplace transform $L^{1,n}_{\infty}(t) = L^{2,n}_{\infty}(t,0)$ of $p^{1,n}_{\infty}$ is finite and writes
\begin{equation*}
  L^{1,n}_{\infty}(t) = \frac{1}{n} \sum_{i=1}^n I^n_i(t), \qquad I^n_i(t) := \prod_{k=1}^{i-1} \frac{1}{1-t\frac{\sigma^2}{2n} \frac{k/n}{B(k/n)}} \prod_{k=i}^{n-1} \frac{1}{1+t\frac{\sigma^2}{2n} \frac{1-k/n}{B(k/n)}}.
\end{equation*}
Then, we have
\begin{equation}\label{eq:cv:1}
  |L^{1,n}_{\infty}(t) - L_{\infty}(t)| \leq \sum_{i=1}^n \int_{u=\frac{i-1}{n}}^{\frac{i}{n}} |I^n_i(t) - \exp(t \Phi(u))|\dd u.
\end{equation}

Let $\epsilon > 0$ and $\delta \in (0,1/2)$ be given by Subsection~\ref{ss:prelim} for the pair $(t,0) \in \Neigh_2$. We split the sum appearing in the right-hand side of~\eqref{eq:cv:1} into {\em boundary terms}, corresponding to $i \leq n\delta$ and $i \geq n(1-\delta)$, and a {\em central term}, corresponding to $n\delta < i < n(1-\delta)$. These terms are addressed separately, in the respective~\S\ref{sss:boundary} and~\S\ref{sss:central}. 

\subsubsection{Boundary terms}\label{sss:boundary} For all $n \geq n_0$,
\begin{equation*}
  \sum_{i \leq n\delta} \int_{u=\frac{i-1}{n}}^{\frac{i}{n}} |I^n_i(t) - \exp(t \Phi(u))|\dd u \leq \frac{1}{n}\sum_{i=1}^{\lfloor n\delta \rfloor} I^n_i(t) + \int_{u=0}^{\delta} \exp(t\Phi(u))\dd u.
\end{equation*}
It is an easy consequence of the point~\eqref{it:main:1} in Theorem~\ref{theo:main} that the integral in the right-hand side above vanishes with $\delta$. The purpose of this paragraph is to show that
\begin{equation}\label{eq:cv:2}
  \lim_{\delta \dto 0} \limsup_{n \to +\infty} \frac{1}{n}\sum_{i=1}^{\lfloor n\delta \rfloor} I^n_i(t) = 0.
\end{equation}

Let us first assume that $t \geq 0$. Then, for all $i \leq n\delta$,
\begin{equation*}
  \prod_{k=i}^{n-1} \frac{1}{1+t\frac{\sigma^2}{2n} \frac{1-k/n}{B(k/n)}} \leq 1.
\end{equation*}
We now use the fact that, if $i \leq n\delta$, then for all $k \in \{1, \ldots, i-1\}$, $\frac{k/n}{B(k/n)} \leq \frac{1}{b(0)-\epsilon}$, to write
\begin{equation*}
  \prod_{k=1}^{i-1} \frac{1}{1-t\frac{\sigma^2}{2n} \frac{k/n}{B(k/n)}} \leq \left(1-\frac{t\sigma^2}{2n(b(0)-\epsilon)}\right)^{-(i-1)} \leq \left(1-\frac{t\sigma^2}{2n(b(0)-\epsilon)}\right)^{-n},
\end{equation*}
as soon as $n$ is large enough to ensure that $t\sigma^2 / (2n(b(0)-\epsilon) < 1$. Using~\eqref{eq:limsup}, we deduce that
\begin{equation*}
  \limsup_{n \to +\infty} \frac{1}{n}\sum_{i=1}^{\lfloor n\delta \rfloor} I^n_i(t) \leq \delta \exp\left(\frac{t\sigma^2}{2(b(0)-\epsilon)}\right),
\end{equation*}
and~\eqref{eq:cv:2} easily follows.

Let us now assume that $t < 0$. Then, for $i \leq n\delta$, we still have the rough bound
\begin{equation*}
  \prod_{k=1}^{i-1} \frac{1}{1-t\frac{\sigma^2}{2n} \frac{k/n}{B(k/n)}} \leq 1,
\end{equation*}
but we need to be more careful as far as the product
\begin{equation*}
  \prod_{i=k}^{n-1} \frac{1}{1+t\frac{\sigma^2}{2n} \frac{1-k/n}{B(k/n)}} = \prod_{i=k}^{n-1} \frac{1}{1-f^-_{k,n}(t)}
\end{equation*}
is concerned. On the one hand, 
\begin{equation*}
  \log\left( \prod_{k=\lfloor n\delta \rfloor + 1}^{n-1} \frac{1}{1-f^-_{k,n}(t)}\right) = - \sum_{k=\lfloor n\delta \rfloor + 1}^{n-1} \log\left(1-f^-_{k,n}(t)\right),
\end{equation*} 
and combining Lemma~\ref{lem:prelim} with the inequality~\eqref{it:TL:log} yields
\begin{equation*}
  - \sum_{k=\lfloor n\delta \rfloor + 1}^{n-1} \log\left(1-f^-_{k,n}(t)\right) \leq  \sum_{k=\lfloor n\delta \rfloor + 1}^{n-1} \left(f^-_{k,n}(t) + \kappa(\bar{\alpha}) \left(f^-_{k,n}(t)\right)^2\right).
\end{equation*}
Since the definition of $f^-_{k,n}(t)$ yields
\begin{equation*}
  \sum_{k=\lfloor n\delta \rfloor + 1}^{n-1} f^-_{k,n}(t) = - \frac{t\sigma^2}{2n} \sum_{k=1}^{n-1} \ind{k/n > \delta} \frac{1-k/n}{B(k/n)}
\end{equation*}
while~\eqref{eq:fmoins} implies
\begin{equation*}
  \sum_{k=\lfloor n\delta \rfloor + 1}^{n-1} \left(f^-_{k,n}(t)\right)^2 \leq \frac{1}{n} \left(\frac{t\sigma^2}{2m_+(\delta)}\right)^2,
\end{equation*}
we deduce that
\begin{equation*}
  \lim_{n \to +\infty} \log\left( \prod_{k=\lfloor n\delta \rfloor + 1}^{n-1} \frac{1}{1-f^-_{k,n}(t)}\right) = - \frac{t\sigma^2}{2} \int_{v=\delta}^1 \frac{1-v}{B(v)}\dd v.
\end{equation*}
On the other hand,~\eqref{eq:fmoins} gives
\begin{equation*}
  \prod_{k=i}^{\lfloor n\delta \rfloor} \frac{1}{1-f^-_{k,n}(t)} \leq \prod_{k=i}^{\lfloor n\delta \rfloor} \frac{1}{1-\alpha/k},
\end{equation*}
where $\alpha := \alpha_-(t) < 1$. Using~\eqref{it:TL:log} again, we write
\begin{equation*}
  \begin{aligned}
    \log \left(\prod_{k=i}^{\lfloor n\delta \rfloor} \frac{1}{1-\alpha/k}\right) & = - \sum_{k=i}^{\lfloor n\delta \rfloor} \log\left(1-\frac{\alpha}{k}\right)\\
    & \leq \sum_{k=i}^{\lfloor n\delta \rfloor} \left(\frac{\alpha}{k} + \kappa(\alpha)\frac{\alpha^2}{k^2}\right)\\
    & \leq \alpha \sum_{k=i}^{\lfloor n\delta \rfloor} \frac{1}{k} + \kappa(\alpha)\alpha^2\frac{\pi^2}{6}\\
    & \leq \alpha \left(1 + \log(n\delta) - \log(i)\right) + \kappa(\alpha)\alpha^2\frac{\pi^2}{6},
  \end{aligned}
\end{equation*}
so that
\begin{equation*}
  \prod_{k=i}^{\lfloor n\delta \rfloor} \frac{1}{1-f^-_{k,n}(t)} \leq K(\alpha) \delta^{\alpha} \frac{1}{(i/n)^{\alpha}},
\end{equation*}
where $K(\alpha) := \exp(\alpha + \kappa(\alpha)\alpha^2\pi^2/6)$. Since
\begin{equation*}
  \lim_{n \to +\infty} \frac{1}{n} \sum_{i=1}^{\lfloor n\delta \rfloor} \frac{1}{(i/n)^{\alpha}} = \int_{v=0}^{\delta} \frac{\dd v}{v^{\alpha}} = \frac{\delta^{1-\alpha}}{1-\alpha},
\end{equation*}
we conclude that
\begin{equation*}
  \limsup_{n \to +\infty} \frac{1}{n} \sum_{i=1}^{\lfloor n\delta \rfloor} I^n_i(t) \leq \frac{K(\alpha)}{1-\alpha} \delta \exp\left(- \frac{t\sigma^2}{2} \int_{v=\delta}^1 \frac{1-v}{B(v)}\dd v\right) =: M(\delta).
\end{equation*}
To obtain~\eqref{eq:cv:2}, we now have to check that $M(\delta)$ vanishes with $\delta$. To this aim, we fix $0 < \eta < b(0)\wedge(-b(1))$ such that $t \in \Neigh^{\eta}$. Since the diverging integral $\int_{v=\delta}^1 \frac{1-v}{B(v)}\dd v$ is equivalent to $-\log(\delta)/b(0)$ when $\delta$ vanishes, we deduce that, for $\delta$ small enough, we have
\begin{equation*}
  \int_{v=\delta}^1 \frac{1-v}{B(v)}\dd v \leq \frac{-\log \delta}{b(0)-\eta},
\end{equation*}
so that
\begin{equation*}
  \exp\left(- \frac{t\sigma^2}{2} \int_{v=\delta}^1 \frac{1-v}{B(v)}\dd v\right) \leq \delta^{-\beta}, \qquad \text{with } \beta := \frac{-t\sigma^2}{2(b(0)-\eta)} \in (0,1).
\end{equation*}
As a conclusion, $M(\delta)$ is of order $\delta^{1-\beta}$ when $\delta$ is small, whence~\eqref{eq:cv:2}. 

The boundary term corresponding to $i \geq n(1-\delta)$ can be handled by symmetric arguments.

\subsubsection{Central term}\label{sss:central} We now prove that
\begin{equation}\label{eq:cv:3}
  \lim_{\delta \dto 0} \limsup_{n \to +\infty} \sum_{i = \lfloor n \delta \rfloor + 1}^{\lfloor n (1-\delta) \rfloor - 1} \int_{u=\frac{i-1}{n}}^{\frac{i}{n}} |I^n_i(t)-\exp(t\Phi(u))|\dd u = 0.
\end{equation}
To this aim we fix $i \in \{\lfloor n \delta \rfloor + 1, \ldots, \lfloor n (1-\delta) \rfloor - 1\}$ and $u \in [\frac{i-1}{n}, \frac{i}{n}]$. By~\eqref{it:TL:exp},
\begin{equation*}
  |I^n_i(t)-\exp(t\Phi(u))| \leq \exp(t\Phi(u)) (|\Delta| + |R(\Delta)|),
\end{equation*}
where $\Delta := \Delta_1 + \Delta_2$, with
\begin{equation*}
  \begin{aligned}
    & \Delta_1 := - \sum_{k=1}^{i-1} \log\left(1-\frac{t\sigma^2}{2n}\frac{k/n}{B(k/n)}\right) - \frac{t\sigma^2}{2}\int_{v=0}^u \frac{v}{B(v)}\dd v,\\
    & \Delta_2 := - \sum_{k=i}^{n-1} \log\left(1+\frac{t\sigma^2}{2n}\frac{1-k/n}{B(k/n)}\right) + \frac{t\sigma^2}{2}\int_{v=0}^u \frac{1-v}{B(v)}\dd v.
  \end{aligned}
\end{equation*}
For all $k \in \{1, \ldots, i-1\}$, we deduce from Lemma~\ref{lem:prelim}, the inequality~\eqref{it:TL:log} and the estimate~\eqref{eq:fplus} that
\begin{equation*}
  \begin{aligned}
    |\Delta_1| & \leq \left|\sum_{k=1}^{i-1} \frac{t\sigma^2}{2n}\frac{k/n}{B(k/n)} - \frac{t\sigma^2}{2}\int_{v=0}^u \frac{v}{B(v)}\dd v \right| + \frac{\kappa(\bar{\alpha})}{n}\left(\frac{t\sigma^2}{2m_-(\delta)}\right)^2\\
    & \leq \frac{|t|\sigma^2}{2} \sum_{k=1}^{i-1} \int_{v=\frac{k-1}{n}}^{\frac{k}{n}}\left|\frac{k/n}{B(k/n)} - \frac{v}{B(v)}\right| \dd v + \frac{|t|\sigma^2}{2} \int_{v=\frac{i-1}{n}}^u\frac{v}{B(v)} \dd v + \frac{\kappa(\bar{\alpha})}{n}\left(\frac{t\sigma^2}{2m_-(\delta)}\right)^2.
  \end{aligned}
\end{equation*}
Using the uniform continuity of $v/B(v)$ on $[0,1-\delta]$, we deduce that for $n$ large enough, for all $k \in \{1, \ldots, \lfloor n(1-\delta) \rfloor - 1\}$,
\begin{equation*}
  \forall v \in \left[\frac{k-1}{n}, \frac{k}{n}\right], \qquad \left|\frac{k/n}{B(k/n)} - \frac{v}{B(v)}\right| \leq \delta.
\end{equation*}
As a consequence,
\begin{equation*}
  |\Delta_1| \leq \frac{|t|\sigma^2}{2} \delta + \frac{1}{n} \left(\frac{|t|\sigma^2}{2m_-(\delta)} + \kappa(\bar{\alpha}) \left(\frac{t\sigma^2}{2m_-(\delta)}\right)^2\right) =: M_1(n,\delta),
\end{equation*}
and we note that $M_1(n,\delta)$ does not depend on $i \in \{\lfloor n \delta \rfloor + 1, \ldots, \lfloor n (1-\delta) \rfloor - 1\}$ and satisfies
\begin{equation*}
  \lim_{\delta \dto 0} \limsup_{n \to +\infty} M_1(n,\delta) = 0.
\end{equation*}
We similarly construct $M_2(n,\delta)$ satisfying the same conditions as $M_1(n,\delta)$ and such that $|\Delta_2| \leq M_2(n,\delta)$ for $n$ large enough. As a consequence, for $\delta > 0$ small enough and $n$ large enough, we have $|\Delta_1| + |\Delta_2| \leq 1$, so that~\eqref{it:TL:exp} yields
\begin{equation*}
  |I^n_i(t)-\exp(t\Phi(u))| \leq \exp(t\Phi(u)) (M_1(n,\delta) + M_2(n,\delta) + \frac{\exp(1)}{2}(M_1(n,\delta) + M_2(n,\delta))^2),
\end{equation*}
and finally
\begin{equation*}
  \begin{aligned}
    & \sum_{i = \lfloor n \delta \rfloor + 1}^{\lfloor n (1-\delta) \rfloor - 1} \int_{u=\frac{i-1}{n}}^{\frac{i}{n}} |I^n_i(t)-\exp(t\Phi(u))|\dd u\\
    & \qquad \leq \left(M_1(n,\delta) + M_2(n,\delta) + \frac{\exp(1)}{2}(M_1(n,\delta) + M_2(n,\delta))^2\right) \int_{u=0}^1 \exp(t\Phi(u))\dd u,
  \end{aligned}
\end{equation*}
which completes the proof of~\eqref{eq:cv:3}.

\subsection{Convergence of \texorpdfstring{$L^{2,n}_{\infty}(s,t)-L^{1,n}_{\infty}(s)L^{1,n}_{\infty}(t)$}{L2n - L1nL1n}}\label{ss:step2} Let $n_0 \geq 2$ be given by Lemma~\ref{lem:prelim}. Then for all $n \geq n_0$,
\begin{equation*}
  \begin{aligned}
    |L^{2,n}_{\infty}(s,t) - L^{1,n}_{\infty}(s)L^{1,n}_{\infty}(t)| & = \left|\frac{1}{n(n-1)} \sum_{i=1}^n \sum_{j \not= i} J^n_{i,j}(s,t) - \frac{1}{n^2} \sum_{i=1}^n \sum_{j=1}^n I_i^n(s)I_j^n(t)\right|\\
    & \leq \frac{1}{n(n-1)} \sum_{i=1}^n \sum_{j \not= i} |J^n_{i,j}(s,t)-I^n_i(s)I^n_j(t)|\\
    & \quad + \frac{1}{n(n-1)} \sum_{i=1}^n I^n_i(s)I^n_i(t) + \frac{1}{n-1} L^{1,n}_{\infty}(s)L^{1,n}_{\infty}(t).
  \end{aligned}
\end{equation*}
By the results of Subsection~\ref{ss:step1}, the last term in the right-hand side above vanishes when $n$ grows to infinity. The diagonal term $\frac{1}{n(n-1)} \sum_{i=1}^n I^n_i(s)I^n_i(t)$ is addressed in~\S\ref{sss:diagonal}, and the main term $\frac{1}{n(n-1)} \sum_{i=1}^n \sum_{j \not= i} |J^n_{i,j}(s,t)-I^n_i(s)I^n_j(t)|$ is addressed in~\S\ref{sss:main}.

\subsubsection{Diagonal term}\label{sss:diagonal} In this paragraph, we prove that
\begin{equation}\label{eq:diagonal}
  \lim_{n \to +\infty} \frac{1}{n(n-1)} \sum_{i=1}^n I^n_i(s)I^n_i(t) = 0.
\end{equation}
To this aim, we write, for all $i \in \{1, \ldots, n\}$,
\begin{equation*}
  I^n_i(s)I^n_i(t) = \prod_{k=1}^{i-1} \frac{1}{1 - \frac{s\sigma^2}{2n}\frac{k/n}{B(k/n)}}\frac{1}{1 - \frac{t\sigma^2}{2n}\frac{k/n}{B(k/n)}}\prod_{k=i}^{n-1} \frac{1}{1 + \frac{s\sigma^2}{2n}\frac{1-k/n}{B(k/n)}}\frac{1}{1 + \frac{t\sigma^2}{2n}\frac{1-k/n}{B(k/n)}},
\end{equation*}
and note that if $st \geq 0$, then $I^n_i(s)I^n_i(t) \leq I^n_i(s+t)$, so that~\eqref{eq:diagonal} follows from the results of Subsection~\ref{ss:step1}. On the other hand, if $st < 0$, say $s < 0 < t$, then
\begin{equation*}
  I^n_i(s)I^n_i(t) \leq \prod_{k=1}^{i-1} \frac{1}{1 - \frac{t\sigma^2}{2n}\frac{k/n}{B(k/n)}}\prod_{k=i}^{n-1} \frac{1}{1 + \frac{s\sigma^2}{2n}\frac{1-k/n}{B(k/n)}}.
\end{equation*}
Let us fix $\epsilon > 0$ and $\delta \in (0,1/2)$ as in Subsection~\ref{ss:prelim}. Arguing as in~\S\ref{sss:boundary}, we obtain
\begin{equation*}
  \limsup_{n \to +\infty} \frac{1}{n} \sum_{i=1}^{\lfloor n\delta \rfloor} \prod_{k=1}^{i-1} \frac{1}{1 - \frac{t\sigma^2}{2n}\frac{k/n}{B(k/n)}}\prod_{k=i}^{n-1} \frac{1}{1 + \frac{s\sigma^2}{2n}\frac{1-k/n}{B(k/n)}} \leq \exp\left(\frac{t\sigma^2}{2(b(0)-\epsilon)}\right) M(\delta) < +\infty,
\end{equation*}
and the same arguments apply to the sum for $i \geq n(1-\delta)$. On the other hand, combining the estimates~\eqref{eq:fplus}, \eqref{eq:fmoins} with~\eqref{eq:limsup} yields
\begin{equation*}
  \limsup_{n \to +\infty} \sum_{i=\lfloor n\delta \rfloor + 1}^{\lfloor n(1-\delta) \rfloor - 1} \prod_{k=1}^{i-1} \frac{1}{1 - \frac{t\sigma^2}{2n}\frac{k/n}{B(k/n)}}\prod_{k=i}^{n-1} \frac{1}{1 + \frac{s\sigma^2}{2n}\frac{1-k/n}{B(k/n)}} \leq \exp\left(\frac{t\sigma^2}{2m_-(\delta)} - \frac{s\sigma^2}{2m_+(\delta)}\right).
\end{equation*}
We deduce that
\begin{equation*}
  \limsup_{n \to +\infty} \frac{1}{n}\sum_{i=1}^n I^n_i(s)I^n_i(t) < +\infty,
\end{equation*}
whence~\eqref{eq:diagonal}.

\subsubsection{Main term}\label{sss:main} In this paragraph, we finally check that
\begin{equation}\label{eq:main}
  \lim_{n \to +\infty} \frac{1}{n(n-1)} \sum_{i=1}^n \sum_{j \not= i} |J^n_{i,j}(s,t)-I^n_i(s)I^n_j(t)| = 0.
\end{equation}
By~\eqref{it:TL:exp}, we have, for all $i,j \in \{1, \ldots, n\}$ such that $i \not= j$,
\begin{equation*}
  |J^n_{i,j}(s,t)-I^n_i(s)I^n_j(t)| \leq I^n_i(s)I^n_j(t) \left(|\rho^n_{i,j}(s,t)| + |R(\rho^n_{i,j}(s,t))|\right),
\end{equation*}
where
\begin{equation*}
  \rho^n_{i,j}(s,t) := \log(J^n_{i,j}(s,t))-\log(I^n_i(s)I^n_j(t)).
\end{equation*}

Remark that $\rho^n_{i,j}(s,t)$ writes as a sum, for $k \in \{1, \ldots, n-1\}$, of terms of the form 
\begin{equation*}
  \log(1-f^{\pm}_{k,n}(s)-f^{\pm}_{k,n}(t)) - \log(1-f^{\pm}_{k,n}(s)) - \log(1-f^{\pm}_{k,n}(t)),
\end{equation*}
where Lemma~\ref{lem:prelim} ensures that each term $f^{\pm}_{k,n}(s)$, $f^{\pm}_{k,n}(t)$ and $f^{\pm}_{k,n}(s)+f^{\pm}_{k,n}(t)$ is lower than $\bar{\alpha} < 1$. As a consequence,~\eqref{it:TL:log} yields
\begin{equation*}
  \begin{aligned}
    & |\log(1-f^{\pm}_{k,n}(s)-f^{\pm}_{k,n}(t)) - \log(1-f^{\pm}_{k,n}(s)) - \log(1-f^{\pm}_{k,n}(t))|\\
    & \qquad \leq \kappa(\bar{\alpha})((f^{\pm}_{k,n}(s)+f^{\pm}_{k,n}(t))^2+(f^{\pm}_{k,n}(s))^2+(f^{\pm}_{k,n}(t))^2)\\
    & \qquad \leq 3\kappa(\bar{\alpha})((f^{\pm}_{k,n}(s))^2+(f^{\pm}_{k,n}(t))^2),
  \end{aligned}
\end{equation*}
hence $|\rho^n_{i,j}(s,t)| \leq 3\kappa(\bar{\alpha})\{F^n_i(s) + F^n_j(t)\}$, where
\begin{equation*}
  \begin{aligned}
    F^n_i(s) & := \sum_{k=1}^{i-1} \left(f^-_{k,n}(s)\right)^2 + \sum_{k=i}^{n-1} \left(f^+_{k,n}(s)\right)^2 = \sum_{k=1}^{i-1} \left(\frac{s\sigma^2}{2n}\frac{k/n}{B(k/n)}\right)^2 + \sum_{k=i}^{n-1} \left(\frac{s\sigma^2}{2n}\frac{1-k/n}{B(k/n)}\right)^2.
  \end{aligned}
\end{equation*}
We deduce from the estimates~\eqref{eq:fplus} and~\eqref{eq:fmoins} that
\begin{equation*}
  \begin{aligned}
    F^n_i(s) & \leq \frac{1}{n} \left(\left(\frac{s\sigma^2}{2m_-(\delta)^2}\right)^2 + \left(\frac{s\sigma^2}{2m_+(\delta)^2}\right)^2\right)\\
    & \quad + \frac{\pi^2}{6}\left(\left(\frac{|s|\sigma^2}{2(b(0)-\epsilon)}\right)^2\ind{i \leq n\delta} + \left(\frac{|s|\sigma^2}{2(-b(1)-\epsilon)}\right)^2\ind{i \geq n(1-\delta)}\right),
  \end{aligned}
\end{equation*}
so that that there exist a nonnegative and finite constant $M(\delta)$, that depends on $\delta$, and a nonnegative and finite constant $C$, that does not depend on $\delta$, such that
\begin{equation*}
  |\rho^n_{i,j}(s,t)| \leq \frac{M(\delta)}{n} + C\left(\ind{i \leq n\delta} + \ind{i \geq n(1-\delta)} + \ind{j \leq n\delta} + \ind{j \geq n(1-\delta)}\right).
\end{equation*}
For $n$ large enough, the right-hand side above is lower that $3C$, so that~\eqref{it:TL:exp} yields
\begin{equation*}
  \begin{aligned}
    & |R(\rho^n_{i,j}(s,t))| \leq \frac{\exp(3C)}{2} \left\{\frac{M(\delta)}{n} + C\left(\ind{i \leq n\delta} + \ind{i \geq n(1-\delta)} + \ind{j \leq n\delta} + \ind{j \geq n(1-\delta)}\right)\right\}^2\\
    & \quad \leq \frac{3}{2}\exp(3C) \left\{\frac{M(\delta)^2}{n^2} + C^2\left(\ind{i \leq n\delta} + \ind{i \geq n(1-\delta)}\right)^2 + C^2\left(\ind{j \leq n\delta} + \ind{j \geq n(1-\delta)}\right)^2\right\}\\
    & \quad = \frac{3}{2}\exp(3C) \left\{\frac{M(\delta)^2}{n^2} + C^2\left(\ind{i \leq n\delta} + \ind{i \geq n(1-\delta)} + \ind{j \leq n\delta} + \ind{j \geq n(1-\delta)}\right)\right\}.
  \end{aligned}
\end{equation*}
As a consequence, there exist a nonnegative and finite constant $M'(\delta)$, that depends on $\delta$, and a nonnegative and finite constant $C'$, that does not depend on $\delta$, such that, for $n$ large enough, for all $i \not= j$ in $\{1, \ldots, n\}$,
\begin{equation}\label{eq:braced}
  \begin{aligned}
    & |J^n_{i,j}(s,t) - I^n_i(s)I^n_j(t)|\\
    & \qquad \leq I^n_i(s)I^n_j(t) \left\{\frac{M'(\delta)}{n} + C'\left(\ind{i \leq n\delta} + \ind{i \geq n(1-\delta)} + \ind{j \leq n\delta} + \ind{j \geq n(1-\delta)}\right)\right\}.
  \end{aligned}
\end{equation}

To complete the proof of~\eqref{eq:main}, we now check that
\begin{equation*}
  \lim_{\delta \dto 0} \limsup_{n \to +\infty} \frac{1}{n(n-1)} \sum_{i=1}^n \sum_{j \not= i} I^n_i(s)I^n_j(t) \left\{\cdots\right\} = 0,
\end{equation*}
where $\{\cdots\}$ refers to the braced term in the right-hand side of~\eqref{eq:braced}. Note that, on account of the results of~\S\ref{sss:diagonal}, it is equivalent to show that
\begin{equation*}
  \lim_{\delta \dto 0} \limsup_{n \to +\infty} \frac{1}{n^2} \sum_{i=1}^n \sum_{j=1}^n I^n_i(s)I^n_j(t) \left\{\cdots\right\} = 0.
\end{equation*}
On the one hand,
\begin{equation*}
  \frac{1}{n^2} \sum_{i=1}^n \sum_{j=1}^n I^n_i(s)I^n_j(t) \frac{M'(\delta)}{n} = \frac{M'(\delta)}{n} L^{1,n}_{\infty}(s)L^{1,n}_{\infty}(t)
\end{equation*}
vanishes when $n$ grows to infinity. On the other hand,
\begin{equation*}
  \limsup_{n \to +\infty} \frac{1}{n^2} \sum_{i=1}^n \sum_{j=1}^n I^n_i(s)I^n_j(t) C'\ind{i \leq n\delta} = C'L_{\infty}(t) \limsup_{n \to +\infty}\frac{1}{n} \sum_{i=1}^{\lfloor n\delta\rfloor} I^n_i(s),
\end{equation*}
and it was proved in~\S\ref{sss:boundary} that the last term in the right-hand side vanishes with $\delta$. Addressing the other boundary terms similarly, we obtain~\eqref{eq:main} and thereby complete the proof of the point~\eqref{it:main:3} in Theorem~\ref{theo:main}.


\appendix
\section{Proof of Corollary~\ref{cor:main}}\label{s:app}

 
\begin{proof}[Proof of Corollary~\ref{cor:main}]
  Since $\Neigh_2$ is an open subset of $\R^2$ containing $(0,0)$, the point~\eqref{it:main:3} of Theorem~\ref{theo:main} implies the weak convergence of $P^{2,n}_{\infty}$ to the product measure $(P_{\infty})^{\otimes 2}$. According to the proof of~\cite[Proposition~2.2, p.~177]{sznitman}, this is enough to ensure the $P_{\infty}$-chaoticity of the sequence $(P^n_{\infty})_{n \geq 1}$, which is the point~\eqref{it:cor:1} of Corollary~\ref{cor:main}. 
  
  Let us now address the point~\eqref{it:cor:2} and fix $k \geq 1$, $q \in [1,+\infty)$. Following Remark~\ref{rk:q} and~\cite[Theorem~6.9]{villani}, to prove that $P^{k,n}_{\infty}$ converges to $(P_{\infty})^{\otimes k}$ in Wasserstein distance of order $q$, it suffices to check that 
  \begin{equation*}
    \lim_{n \to +\infty} \Exp[|X_1^n|^q + \cdots + |X_k^n|^q] = \Exp[|X_1|^q + \cdots + |X_k|^q],
  \end{equation*} 
  where, for all $n \geq 1$, $(X_1^n, \ldots, X_n^n)$ is distributed according to $P^{k,n}_{\infty}$, while $(X_1, \ldots, X_n)$ is distributed according to $(P_{\infty})^{\otimes k}$. Using the linearity of the expectation and the symmetry of $P^n_{\infty}$, we deduce that it is enough to check this result for $k=1$. Then we already know that $X_1^n$ converges in distribution to $X_1$, and we now check that the sequence of random variables $(|X_1^n|^q)_{n \geq 1}$ is uniformly integrable, which implies the convergence of $\Exp[|X_1^n|^q]$ and completes the proof.
  
  To check the uniform integrability of the sequence $(|X_1^n|^q)_{n \geq 1}$, we fix $r > q$ and prove that the sequence $(\Exp[|X_1^n|^r])_{n \geq 1}$ is bounded. To this aim, we fix $\rho > 0$ such that $-\rho \in \Neigh$ and $\rho \in \Neigh$. Then, there exists $M \geq 0$ such that, for all $x \in \R$, $|x|^r \leq M + (\exp(-\rho x) + \exp(\rho x))$, so that, for all $n \geq 1$,
  \begin{equation*}
    \Exp[|X_1^n|^r] \leq M + L^{2,n}_{\infty}(-\rho,0) + L^{2,n}_{\infty}(\rho,0),
  \end{equation*}
  and we deduce from the point~\eqref{it:main:3} of Theorem~\ref{theo:main} that the right-hand side converges to a finite value when $n$ grows to infinity, which implies that the left-hand side is uniformly bounded with respect to $n$.
\end{proof}

\subsection*{Acknowledgements} The author is grateful to Benjamin Jourdain for his very helpful comments on this work.




\begin{thebibliography}{10}

\bibitem{banner}
A.~D. Banner, R.~Fernholz, and I.~Karatzas.
\newblock Atlas models of equity markets.
\newblock {\em Ann. Appl. Probab.}, 15(4):2296--2330, 2005.

\bibitem{cp}
S.~Chatterjee and S.~Pal.
\newblock A phase transition behavior for {B}rownian motions interacting
  through their ranks.
\newblock {\em Probab. Theory Related Fields}, 147(1-2):123--159, 2010.

\bibitem{dsvz}
A.~Dembo, M.~Shkolnikov, S.R.S. Varadhan, and O.~Zeitouni.
\newblock {Large deviations for diffusions interacting through their ranks}.
\newblock To appear in {\em Comm. Pure Appl. Math.}, preprint available at \url{http://arxiv.org/abs/1211.5223}.

\bibitem{fernholz}
R.~Fernholz.
\newblock {\em Stochastic portfolio theory}, volume~48 of {\em Applications of
  Mathematics (New York)}.
\newblock Springer-Verlag, New York, 2002.
\newblock Stochastic Modelling and Applied Probability.

\bibitem{ferkar}
R.~Fernholz and I.~Karatzas.
\newblock Stochastic portfolio theory: A survey.
\newblock In Handbook of Numerical Analysis. Mathematical Modeling and
  Numerical Methods in Finance, 2009.

\bibitem{jourdain:signed}
B.~Jourdain.
\newblock Diffusion processes associated with nonlinear evolution equations for
  signed measures.
\newblock {\em Methodol. Comput. Appl. Probab.}, 2(1):69--91, 2000.

\bibitem{jm}
B.~Jourdain and F.~Malrieu.
\newblock Propagation of chaos and {P}oincar\'e inequalities for a system of
  particles interacting through their {CDF}.
\newblock {\em Ann. Appl. Probab.}, 18(5):1706--1736, 2008.

\bibitem{jourey:atlas}
B.~Jourdain and J.~Reygner.
\newblock Capital distribution and portfolio performance for rank-based models
  of equity market.
\newblock To appear in {\em Ann. Finance}, available at \url{http://dx.doi.org/10.1007/s10436-014-0258-5}.

\bibitem{jourey}
B.~Jourdain and J.~Reygner.
\newblock Propagation of chaos for rank-based interacting diffusions and long
  time behaviour of a scalar quasilinear parabolic equation.
\newblock {\em Stoch. PDE: Anal. Comp.}, 1(3):455--506, 2013.

\bibitem{pp}
S.~Pal and J.~Pitman.
\newblock One-dimensional {B}rownian particle systems with rank-dependent
  drifts.
\newblock {\em Ann. Appl. Probab.}, 18(6):2179--2207, 2008.

\bibitem{sznitman}
A.-S. Sznitman.
\newblock Topics in propagation of chaos.
\newblock In {\em \'{E}cole d'\'{E}t\'e de {P}robabilit\'es de {S}aint-{F}lour
  {XIX}---1989}, volume 1464 of {\em Lecture Notes in Math.}, pages 165--251.
  Springer, Berlin, 1991.

\bibitem{veretennikov}
A.~Veretennikov.
\newblock Strong solutions and explicit formulas for solutions of stochastic
  integral equations.
\newblock {\em Mat. Sb. (N.S.)}, 111(153)(3):434--452, 480, 1980.

\bibitem{villani}
C.~Villani.
\newblock {\em Optimal transport}, volume 338 of {\em Grundlehren der
  Mathematischen Wissenschaften [Fundamental Principles of Mathematical
  Sciences]}.
\newblock Springer-Verlag, Berlin, 2009.
\newblock Old and new.

\end{thebibliography}
\end{document}